\documentclass[12pt,times,reqno]{amsart}
\usepackage[english]{babel}  \usepackage{latexsym,amsfonts} \usepackage{amsmath, amssymb, amsthm} \usepackage{mathrsfs}
\usepackage{enumerate} \usepackage{lmodern} \usepackage{mathpazo}   \usepackage{euscript} 

\usepackage[top=2cm,bottom=2cm,left=2.4cm,right=2.4cm]{geometry}
\numberwithin{equation}{section}  \makeatletter\@addtoreset{equation}{section}
\newtheorem {theorem}{Theorem}[section]        
  \newtheorem {lemma}[theorem]{Lemma}
\newtheorem {definition}[theorem]{Definition}   \newtheorem {corollary}[theorem]{Corollary}     \newtheorem {remark}[theorem]{Remark}
\newtheorem {proposition}[theorem]{Proposition}       
         
\newcommand{\C}{\mathbb C}    \newcommand{\R}{\mathbb R}   \newcommand{\Z}{\mathbb Z} 	
\newcommand{\norm}[1]{\left\Vert#1\right\Vert}

\newcommand{\Lat}{L}

\newcommand{\Oschi}{\mathcal{O}^{\nu}_{\Gamma,\chi}(\C) }
\newcommand{\Osdchi}{\mathcal{F}^{2,\nu}_{\Gamma,\chi}(\C) }
\newcommand{\Lsdchi}{\mathcal{L}^{2,\nu}_{\Gamma,\chi}(\C) }

\newcommand{\Os}{\mathcal{O}^{\nu}_{\Gamma,\alpha}(\C) }
\newcommand{\Osd}{\mathcal{F}^{2,\nu}_{\Gamma,\alpha}(\C) }
\newcommand{\Lsd}{\mathcal{L}^{2,\nu}_{\Gamma,\alpha}(\C) }

\newcommand{\Em}{\mathcal{E}^{\alpha,\nu}_{m}(\C)} 
\newcommand{\Ell}{\mathcal{E}^{\alpha,\nu}_{0}(\C)}
\newcommand{\Elambda}{\mathcal{E}^{\alpha,\nu}_{\lambda}(\C)}

\newcommand{\Osdg}{\mathcal{F}^{2,\nu}_{\Lat,\chi}(\C) }
\newcommand{\Lsdg}{\mathcal{L}^{2,\nu}_{\Lat,\chi}(\C) }

\newcommand{\Lgamma}{\mathcal{L}^{2}_{\Lat,\alpha}(\R) }

\newcommand{\Kkernel}{K_{\alpha,\nu} }
\newcommand{\transfBarg}{\mathcal{A}_{\nu} }
\newcommand{\Bkernel}{A_{\alpha,\nu} }
\newcommand{\transfG}{\widetilde{\mathcal{A}}_{\nu} }
\newcommand{\Gkernel}{{\widetilde{A}}_{\alpha,\nu} }
\newcommand{\normg}[1]{\left\Vert#1\right\Vert_\Lat}
\newcommand{\normnu}[1]{\left\Vert#1\right\Vert_{\Gamma,\nu}}
\newcommand{\normnua}[1]{\left\Vert#1\right\Vert_{\alpha,\nu}}
 \newcommand{\scalnu}[1]{\left<#1\right>_{\Gamma,\nu}}
 \newcommand{\scalnua}[1]{\left<#1\right>_{\alpha,\nu}}

\newcommand{\epf}{e^{\alpha,\nu}_n }
\newcommand{\eqf}{e^{\alpha,\nu}_m }
\newcommand{\psin}{\psi^{\alpha,\nu}_n }

\newcommand{\mes}{d\lambda}

\newcommand{\pz}{\dfrac{\partial}{\partial z}}

\begin{document}


\title[]{Construction of concrete orthonormal basis for $(L^2,\Gamma,\chi)$-theta functions associated to discrete subgroups of rank one in $(\C,+)$}

 \author{Allal Ghanmi}
  \author{Ahmed Intissar}
 \address{Department of Mathematics, P.O. Box 1014,  Faculty of Sciences, Mohammed V-Agdal University, Rabat, Morocco
}

        \email{ag@fsr.ac.ma} \email{intissar@fsr.ac.ma}


\begin{abstract}
Let $\Gamma$ be a discrete subgroup of rank one of the additive group $(\mathbb{C},+)=(\mathbb{R}^2,+)$,
i.e., $\Gamma=\Z \cdot \omega$ for some $\omega\in \C \setminus\{0\}$, and
 $\chi_\alpha(\gamma) =e^{2i\pi \alpha \gamma}$ a given character on $\Gamma$; $\alpha \in\R$.
By $\Lsd$ we denote the Hilbert space of $(L^2,\Gamma,\chi)$-theta functions,
whose elements are the complex valued functions on the complex plane $\mathbb{C}$ satisfying the functional equation
$$f(z+\gamma) = e^{ 2i\pi \alpha \gamma} e^{\nu H\left(z+\frac \gamma 2,\gamma\right)}f(z); \qquad \nu >0, $$
with $H(z,w):= z \overline{w}$, and such that
$$|| f ||^2_{\Gamma,\nu}:= \int_{\mathbb{C}/\Gamma} |f(z)|^2 e^{-\nu H(z,z)} \mes(z) <+\infty.$$
Our aim is the construction of orthonormal basis for $\Lsd$ as well as for the Hilbert subspace
$\Osd$ of entire functions belonging to $\Lsd$.
For simplicity we deal with the case of  $\Gamma=\Z \cdot 1$ and we show that the functions
$$
\psi_n^{\alpha,\nu} (z) :=  \left({2\nu}/{\pi}\right)^{1/4} e^{-\frac{\pi^2}{\nu}(n+\alpha)^2 } e^{\frac{\nu}2 z^2  + 2i\pi (\alpha + n) z};
 \quad n\in\Z,
$$
 form an orthonormal basis of $\Osd$ and that the corresponding reproducing kernel
$K_{\alpha,\nu}(z,w)$ can be expressed in terms of the generalized theta function $\theta_{\alpha,0}$ of characteristic $[\alpha,0]$.
Furthermore, we show that $\Lsd$ possesses a Hilbertian orthogonal decomposition
  involving the $L^2$-eigenspaces $ \Em $ of the Landau operator
  associated to the eigenvalues $\nu m $ with $ \Ell=\Osd$, and the functions
$$\psi_{m,n}^{\alpha,\nu}(z,\overline{z}) =  \left({2^m m!}\right)^{-1/2}  \psi_n^{\alpha,\nu} (z)
H_m \left(({2\nu})^{1/2} \Im(z) +  ({{2}/{\nu}})^{1/2} \pi (n+\alpha) \right),
$$
for $(m,n)\in \Z^+\times \Z$, constitute a complete orthonormal basis of $\Lsd$, where $H_m$ denotes the $m^{th}$ Hermite polynomial.
\end{abstract}

\keywords 
 {Riemann theta series; Theta function; Discrete subgroup of rank one; Hermite polynomials; Reproducing kernel; Landau operator.}


\maketitle

\section{Introduction, preliminaries and notation}

Classical theta functions are holomorphic functions on $\C$ which are quasi-periodic with respect
to a given lattice $\Lat$ in the complex plane $\C$, i.e., a discrete subgroup of $(\C,+)$ which spans $\C=\R^2$
as a real vector space. This means that for every fixed $\gamma\in \Lat $, there exists a holomorphic map $e_\gamma $ on $\C$ such
that for every $z\in \C$, we have
\begin{equation}\label{IntroFctEq1}
\theta(z+\gamma) =  e_\gamma(z) \theta(z) .
\end{equation}
They can also be interpreted as sections of certain line bundles on complex tori $\C/\Lat $.

The theory of these series is a milestone of nineteenth century mathematics \cite{Houzel1978} and arose from classical problems (length of the ellipse, doubling the arc of the lemniscate).
They attracted the interest of many authors and appeared in Bernoulli's framework (1713) 
as well as in the number-theoretic investigations of Euler (1773) 
and Gauss (1801). 
Special contributions to the theoretical development of these series were made by Jacobi (1827). 
A more detailed theory of elliptic theta series was developed later by Borchardt (1838), 
as well as by  Weil (1948) 
and Weierstrass (1862-1863). 
Classical theta series and related functions are closely connected to the theories of abelian varieties
and quadratic forms, and have been applied to heat conduction problem \cite{Fourier1822},
algorithmic number theory and cryptography and coding theory \cite{Ritzenthaler2004,ShaskaWijesiri2008}.
Moreover, these functions are powerful tools in many areas of quantum field theory
\cite{Cartier1966}.

As basic example, we cite the Jacobi theta function given by the formula
\begin{equation}\label{Jacobi3}
\theta(z|\tau)  := \sum_{n\in \Z} e^{i\pi n^2 \tau + 2i\pi nz} =: \theta_3(z|\tau),
\end{equation}
where $z$ 
 is any complex number and $\tau$ is confined to the upper half-plane $\mathfrak{h}=\{\tau \in \C; \, \Im(\tau)>0\}$.
 In fact, it satisfies the functional equation \eqref{IntroFctEq1} with
 $ e_\gamma(z) := e^{-i\pi l^2\tau - 2i\pi l z}$
 for every $\gamma = l\tau+m $ in the lattice $\Lat = \tau \Z + \Z$, so that
\begin{equation}\label{qpJacobi}
 \theta(z+l\tau + m |\tau) = e^{-i\pi l^2\tau - 2i\pi l z} \theta(z  |\tau) ; \qquad l,m\in \Z.
\end{equation}
To enlarge the category of basic theta series, one considers for any such $\tau$ and any  $\alpha,\beta\in \R$, the series
\begin{equation}\label{RiemannTheta}
 \theta_{\alpha,\beta}(z|\tau)    : =  \sum_{n\in \Z} e^{i\pi (n + \alpha)^2 \tau + 2i\pi (n + \alpha)(z + \beta)}
\end{equation}
which converges absolutely and uniformly for $(z, \tau )$ in compact subsets of $\C\times \mathfrak{h}$.
It can be obtained from $\theta(z|\tau)$ by translating $z$ by $\xi=\alpha \tau + \beta$. In fact, we have
\begin{equation}\label{TransfTheta}
\theta_{\alpha,\beta}(z|\tau) = e^{i\pi \alpha^2\tau + 2i\pi \alpha (z+\beta)} \theta(z+\alpha \tau + \beta|\tau).
\end{equation}
The function $\theta_{\alpha,\beta}(z|\tau)$ was firstly introduced by Riemann \cite{Riemann1857} in connection
with Riemann surfaces and is called the Riemann (modified) theta function (or also first order theta function)
with characteristic [$\alpha,\beta$]. 
The case of integer characteristics leads to the four classical theta series.
An overview of the properties of $ \theta_{\alpha,\beta}$ can be found in
\cite{Krazer70,Mumford83,Mumford84,Mumford91,Baker95,Farkas92}.
The most general form of the Riemann theta function defined above was first considered by Wirtinger \cite{Wirtinger1895}.


Hereafter, we endow $\C$ with its standard hermitian structure defined through $H(z,w)=z\overline{w}$ and denote
 by $\mes(z)=dxdy$; $z=x+iy\in\C$, the Lebesgue measure on $\C$. We emphasize the case of theta functions associated to a discrete subgroup
 $\Gamma$ of $(\C,+)$.
More precisely, we have to consider, among others functional spaces, the vector space $\Oschi$ of
all holomorphic functions on $\C$, $f\in \mathcal{O}(\C)$, displaying the functional equation
\begin{equation}\label{IntrFctEq2}
f(z+\gamma) = \chi(\gamma)  e^{\nu H\left(z+\frac{\gamma}2,\gamma\right) } f(z)
\end{equation}
for every $z\in \C$ and $\gamma\in \Gamma$, where $\nu>0$ and $\chi$ is a given map defined on $\Gamma$
with values in the unit circle $U(1)=\{\lambda \in \C; \, |\lambda|=1\}$.
Also, let $\Lambda(\Gamma)$ be a fundamental domain representing in $\C=\R^2$ the orbital group
$\C/\Gamma$ endowed with the quotient topology and perform the space  $\Osdchi$
of functions $f$ belonging to $\Oschi$ such that
 \begin{equation}\label{IntrNorm}
\int_{\Lambda(\Gamma)} |f(z)|^2 e^{-\nu|z|^2} \mes(z) < +\infty .
\end{equation}
 Note that for $f$ satisfying \eqref{IntrFctEq2}, the function $|f(z)|^2 e^{-\nu|z|^2}$ is $\Gamma$-periodic
 and therefore the quantity in \eqref{IntrNorm} makes sense and is independent of $\Lambda(\Gamma)$. An other functional space to be considered is  $\Lsdchi$ consisting of complex valued functions on $\C$  displaying \eqref{IntrFctEq2} and that are $e^{-\nu|z|^2} \mes$-square
integrable on $\Lambda(\Lat)$ (i.e., such that \eqref{IntrNorm} holds).

Three cases can be envisaged:
 (i) The trivial case of $\Gamma=\{0\}$;
 (ii) The case of $\Gamma=\Z \omega$ for some $\omega\in\C\setminus \{0\}$ and
 (iii) $\Gamma$ is a lattice $\Gamma=\Lat=\Z\omega_1 +\Z\omega_2$ spanned by
$\R$-linearly independent vectors $\omega_1,\omega_2\in \C$. 
This follows, since $\Gamma$ can be viewed as a $\Z$-module of rank $r=0,1,2$.
Thus, for the trivial case $\Gamma=\left\{ 0\right\}$, we have $\Lambda(\Gamma)=\C$ and it is obvious that
\eqref{IntrFctEq2} implies $\chi =1$. Therefore,  $\Osdchi$ reduces to the usual Fock-Bargmann space
consisting of $e^{-\nu|z|^2} \mes$-square integrable entire functions. This space is well studied in the mathematical and physical literatures.
Its relevance to the theory of theta series was revealed by
Cartier \cite{Cartier1966}, Satake \cite{Satake1971} and earlier by Weil \cite{Weil1964}. While,
$\Lsdchi$ reduces to the usual Hilbert space $\mathcal{L}^{2}(\C;e^{-\nu|z|^2} \mes)$, for which we know that
the Hermite polynomials
$$ H_{m,n}(z,\overline{z}) : = (-1)^{m+n} e^{\nu |z|^2} \frac{\partial^{m+n}}{\partial z^n\partial \overline{z}^m} (e^{-\nu |z|^2}); \qquad m,n \in\Z^+,$$
constitute an orthoganal basis \cite{Intissar2}.

For the lattice case; i.e., $\Gamma= \Lat= \Z\omega_1 +\Z\omega_2$, $\Lambda(\Lat)$ is compact, and $\Lsdg$ and $\Osdg$ are  non-trivial Hilbert spaces if and only if
$\chi$ verifies the cocycle condition
$$ \chi(\gamma+\gamma') = e^{i\nu \Im(H(\gamma,\gamma'))} \chi(\gamma) \chi(\gamma') \eqno{(RDQ)}$$
for every $\gamma,\gamma'\in \Lat$.
This amounts that the symplectic form $E(z,w):= (\nu/\pi) \Im(H(z,w))$ takes integer values on $\Lat\times \Lat$.
This is to say that  $(H ; E)$ is a Riemann pair and that the torus $\C/\Lat$ is an abelian variety.
In this case, a systematic study of $\Osdg$ is provided in \cite{GhIn2008}. It is proved there that
its description is closely related to the $L^2$-spectral theory of the Landau Laplacian
\begin{align}
\Delta_\nu =-\frac{\partial^2}{\partial z \partial \overline{z}} + \nu
\overline{z} \frac{\partial}{\partial \overline{z}} .\label{Laplacian}
\end{align}
In fact, $\Osdg$ appears as the null space of $\Delta_\nu$ acting on $\Lsdg$.
Determination of its orthogonal complement in $\Lsdg$  leads to consider the eigenspaces
$
  \mathcal{E}^{\nu,\lambda}_{\Gamma,\chi}(\C) = \{ f\in L^{2,\nu}_{\Gamma,\chi}(\C); \quad \Delta_\nu f = \nu \lambda f  \}.
$ 
Indeed, it is shown in \cite{GhIn2008} that
$
\left ( \Osdg \right)^\perp =  \bigoplus\limits_{m=1}{m=+\infty}  \mathcal{E}^{\nu,m}_{\Gamma,\chi}(\C).
$


In the present paper, we are interested in the non-compact case $\Gamma=\Z \omega$.
Note for instance that in this case the condition $(RDQ)$ holds again and it is a necessary and sufficient condition to the corresponding spaces
$\Lsdchi$ and $\Osdchi$
 be nontrivial. Thence, $\chi$ is a character of $\Z\omega$ and then is completely determined, to wit
$$\chi(\gamma) = e^{2i\pi \alpha m}=: \chi_\alpha(m)$$
for certain fixed real number $\alpha$, where $\gamma = m\omega\in \Z\omega$. Furthermore, the corresponding fundamental domain is the strip $S=[0,1]\times\R$.
To ease notation we consider $\Gamma=\Z$ 
and fix notation for the functional spaces on which we will be working.
For fixed real numbers $\nu>0$ and $\alpha$, we say that a complex valued function $f: \C \longrightarrow \C $ is $(\Z, \chi_\alpha,\nu)$-quasi-periodic, if it satisfies  the functional equation
\begin{equation}\label{FuncEq}
f(z+m) = e^{ 2i\pi \alpha m} e^{\nu (z+\frac{m}2) m} f(z)
\end{equation}
for $z\in \C$ and every $m\in \Z$. Thus, we perform the functional spaces
\begin{equation}\label{FctSpO}
\Os = \left\{ f: \C \longrightarrow \C \mbox{ entire and dispaying }   \eqref{FuncEq}; \, \forall z\in \C, \, \forall  m\in \Z\right\}
\end{equation}
and
\begin{equation}\label{FctSpF}
\Osd = \left\{ f \in \Os; \quad \normnu{f} < +\infty \right\},
\end{equation}
where $\normnu{f}$ is the norm
\begin{equation}\label{norm-def1}
\normnu{f}^2 := \int_{S} |f(z)|^2 e^{-\nu |z|^2 } \mes(z)
\end{equation}
associated to the hermitian inner scalar product
\begin{equation}\label{scalar-def1}
\scalnu{f_1,f_2} := \int_{S} f_1(z)\overline{f_2(z)} e^{-\nu |z|^2 } \mes(z).
\end{equation}
In addition, we have to consider the space
\begin{equation}\label{FctSpL}
\Lsd = \left\{ f: \C \longrightarrow \C \mbox{ satisfying }   \eqref{FuncEq}; \, \forall\forall z\in \C, \, \forall  m\in \Z, \mbox{ and }  \normnu{f} < +\infty  \right\},
\end{equation}
where the notation "$\forall\forall$" is used, {\it \`{a} la Mumford} \cite{Mumford83}, to mean "for almost all".
Then, the space $\Osd$ can be shown to be isometric to the space of holomorphic functions $g(z)=g(x,y)$; $z=x+iy$; $x,y\in \R$,
that are periodic in the $x$-direction and such that
\begin{equation}\label{norm-def2}
\normnua{g}^2 := \int_{x\in [0,1]} \int_{y \in \R} |g(x,y)|^2 e^{-2\nu y^2 - 4\pi \alpha  y} dxdy  < +\infty .
\end{equation}
Similarly, $\Lsd$ is isometric to
\begin{equation}
\left\{g: \R^2 \longrightarrow \C; \, \Z\mbox{-periodic and } \normnua{g}  < +\infty \right\} .
\end{equation}
Furthermore,  orthonormal basis for $\Lsd$ involving Hermite polynomials are constructed explicitly and the concrete description
of $\Lsd$ as well as the orthogonal complement of $\Osd$ in $\Lsd$ are obtained. Indeed, it will be shown that $\Delta_\nu$ in
\eqref{Laplacian}, when acting on $\Lsd$, admits a discrete spectrum consisting of the eigenvalues $m=0,1,2, \cdots$,
which occur with infinite multiplicities, and the following hilbertian spectral decomposition
\begin{align}\label{Split2}
\Lsd =  \bigoplus_{m=0}^{+\infty}  \Em 
  \end{align}
holds, where
$
 \Em = \{ f\in \Lsd ; \quad \Delta_\nu f = \nu m f  \}
$ 
are $L^2$-eigenspaces of $\Delta_\nu$ in $\Lsd$,
  with  $\Ell=\Osd$. Added to this last characterization of $\Osd$ as the null space of $\Delta_\nu$ in $\Lsd$, we have to realize $\Osd$ as the image by the standard Bargmann transform
$$ [ \transfBarg \varphi](z) = c(\nu) \int_{\R} e^{-\frac{\nu}{2}(z^2 -\sqrt2 z x +x^2)} \varphi(x) dx$$
of the Hilbert space $\Lgamma$; $L=\sqrt 2 \Z$, of complex valued functions $\varphi$ on $\R$ satisfying
$$\norm{\varphi}^2:=\int_0^{\sqrt{2}} |\varphi(x)|^2 dx <+\infty$$
and that are $(\sqrt{2}\Z,\chi_\alpha)$-quasi-periodic in the sense that
$$
\varphi(q+\sqrt 2 m)= e^{2i\pi \alpha m} \varphi(q+\sqrt 2 m), \quad \forall\forall  q\in \R, \, \forall m\in \Z.
$$
This constitutes the analogous of the usual fact that $\transfBarg$ maps isometrically the Hilbert space
 $\mathcal{L}^{2}(\R,dx)$ onto $\mathcal{L}^{2}(\C;e^{-\nu|z|^2} \mes)$, which are respectively related to $\Gamma=\{0\} \subset \R$ and $\Gamma=\{0\}\subset \C$.

The layout of the paper is as follows. 
In Section 2, we follows in spirit V. Bargmann \cite{Bargmann61} to give a concrete description of the space $\Osd$. We first give an orthonormal basis
and next obtain some useful results connecting the modified theta series defined in \eqref{RiemannTheta} to $\Osd$.  Section 3 will be devoted to
 establish the link between $\Lgamma$, $L=\sqrt 2 \Z$, and $\Osd$
through the Bargmann transform $\transfBarg$.
In section 4, we study $\Lsd$, discussing the spectral theory of the Landau operator $\Delta_\nu$ when acting on $\Lsd$ and constructing
an orthonormal basis of $\Lsd$ involving Hermite polynomials. 



\section{Description of the Fock-Bargmann type space $\Osd$}

Keep notation as in the introduction and begin with the following.

\begin{proposition}\label{Prop-chara}
Let $f$ be a complex valued function on $\C$. The following assertions are equivalent
\begin{itemize}
 \item [i)]  $f$ belongs to $\Os$.

 \item [ii)] There exists an entire and simply periodic function $g$ (i.e., $g(z+1)=g(z)$ for every $z\in\C$) such that
     \begin{equation}\label{chara}
     f(z):= e^{\frac{\nu}2 z^2 + 2i\pi \alpha z} g(z).
     \end{equation}

 \item [iii)] There exists a unique sequence $(a_n)_{n\in\Z}$ of complex numbers such that
     \begin{equation}\label{expansion}
     f(z):= \sum\limits_{n\in\Z} a_n e^{\frac{\nu}2 z^2 + 2i\pi (\alpha + n) z},
     \end{equation}
     where the series  in \eqref{expansion} is absolutely and uniformly convergent in compact subsets of $\C$.
\end{itemize}
\end{proposition}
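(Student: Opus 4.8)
The plan is to establish the cycle of implications $(i)\Rightarrow(ii)\Rightarrow(iii)\Rightarrow(i)$, which is the most economical arrangement since each step amounts to a change of unknown. For $(i)\Rightarrow(ii)$ I would introduce $g(z):=e^{-\frac{\nu}{2}z^2-2i\pi\alpha z}f(z)$, so that \eqref{chara} holds tautologically; because the prefactor $e^{\frac{\nu}{2}z^2+2i\pi\alpha z}$ is entire and nowhere vanishing, $g$ is entire precisely when $f$ is. It then remains to observe that \eqref{FuncEq} with $m=1$ translates into $g(z+1)=g(z)$: feeding $f(z+1)=e^{2i\pi\alpha}e^{\nu(z+\frac12)}f(z)$ into the definition of $g(z+1)$, the quadratic exponent collapses via $-\frac{\nu}{2}(z+1)^2+\nu(z+\frac12)=-\frac{\nu}{2}z^2$ and the linear one via $-2i\pi\alpha(z+1)+2i\pi\alpha=-2i\pi\alpha z$, leaving exactly $g(z)$. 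The reverse implication $(ii)\Rightarrow(i)$, needed to close the cycle, is the same exponent identity carried out with a general $m\in\Z$ together with $g(z+m)=g(z)$.

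For $(ii)\Rightarrow(iii)$ the one point requiring care is that an entire $1$-periodic function $g$ descends through the covering $z\mapsto w=e^{2i\pi z}$ of $\C^*=\C\setminus\{0\}$: since $e^{2i\pi z_1}=e^{2i\pi z_2}$ forces $z_1-z_2\in\Z$, there is a well-defined function $\widehat{g}$ on $\C^*$ with $g(z)=\widehat{g}(e^{2i\pi z})$, and $\widehat{g}$ is holomorphic because $z$ is locally a holomorphic function of $w$. Expanding $\widehat{g}$ in its Laurent series $\widehat{g}(w)=\sum_{n\in\Z}a_n w^n$, which converges absolutely and uniformly on every closed sub-annulus of $\C^*$, and noting that $z\mapsto e^{2i\pi z}$ maps any compact $K\subset\C$ into such a sub-annulus (as $\Im z$ is bounded on $K$), one obtains $g(z)=\sum_{n\in\Z}a_n e^{2i\pi nz}$ with absolute and uniform convergence on compacts of $\C$; multiplying by $e^{\frac{\nu}{2}z^2+2i\pi\alpha z}$ gives \eqref{expansion}. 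Uniqueness of $(a_n)$ follows from uniqueness of the Laurent coefficients, equivalently from $a_n=\int_{0}^{1}g(x)e^{-2i\pi nx}\,dx$. Finally $(iii)\Rightarrow(i)$ is immediate: $f$ is entire as a locally uniform limit of entire functions, and one either verifies \eqref{FuncEq} termwise (the shift $n$ in the exponent is arranged exactly so that $e^{2i\pi(\alpha+n)m}=e^{2i\pi\alpha m}$) and passes to the limit, or reads \eqref{expansion} as $f=e^{\frac{\nu}{2}z^2+2i\pi\alpha z}g$ with $g=\sum_n a_n e^{2i\pi nz}$ manifestly $1$-periodic and invokes $(ii)\Rightarrow(i)$.

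I do not anticipate a genuine obstacle: the only delicate step is the passage from ``entire and $1$-periodic'' to ``Fourier series converging locally uniformly on $\C$'', which is settled cleanly by descent to $\C^*$ and the standard theory of Laurent expansions. Everything else is bookkeeping with the Gaussian automorphy factor $e^{2i\pi\alpha m}e^{\nu(z+\frac{m}{2})m}$, which restricted to $\Z$ is the coboundary of $z\mapsto e^{\frac{\nu}{2}z^2+2i\pi\alpha z}$ and is precisely what makes the substitution \eqref{chara} work.
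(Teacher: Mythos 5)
Your proposal is correct and follows essentially the same route as the paper: factor out the automorphy prefactor $e^{\frac{\nu}{2}z^2+2i\pi\alpha z}$ (the paper's multiplier $M^\nu_\alpha$) to reduce i) to the $1$-periodicity of $g$, then pass between ii) and iii) via the Fourier expansion of an entire simply periodic function. The only difference is organizational: you close a cycle $i)\Rightarrow ii)\Rightarrow iii)\Rightarrow i)$ and actually prove the Fourier-expansion step by descending through $w=e^{2i\pi z}$ to a Laurent series on $\C\setminus\{0\}$, whereas the paper simply cites this standard fact.
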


\begin{proof}
The proof of $i) \Longrightarrow ii)$ lies on the observation that the function $M^\nu_\alpha(z):= e^{-\frac{\nu}2 z^2 - 2i\alpha \pi z} $ satisfies
$$M^\nu_\alpha(z+m)  
         = e^{- 2i\pi \alpha m} e^{-\nu (z+\frac{m}2) m} M^\nu_\alpha(z)$$
and therefore the function $g(z):= M^\nu_\alpha (z) f(z)$ verifies
$$g(z+m) 
        = e^{- 2i\pi \alpha m} e^{-\nu (z+\frac{m}2) m} M^\nu_\alpha(z) f(z+m)
        = g(z)$$
according to \eqref{FuncEq}. The converse $ii) \Longrightarrow i)$ holds using the fact that
$g$ satisfies $g(z+m) = g(z)$ combined with the same argument on $M^\nu_\alpha(z)$ defined above.
Finally,  $ii) \Longleftrightarrow iii)$ follows upon using the  fact that any entire simply periodic function $g$ admits
a Fourier expansion of the form (see \cite{Bargmann61,Jones-Singerman87}):
$$ g(z) = \sum\limits_{n\in\Z} a_n  e^{ 2i\pi n z}$$
which converges absolutely and uniformly for $z$ in compact subsets of $\C$.
\end{proof}


\begin{remark}\label{Rem-normg}
Straightforward computation, for every $f(z) = e^{\frac\nu 2 z^2 + 2i\pi \alpha z} g(z) \in \Osd$, yields
 \begin{equation}\label{norm-def2g}
\normnu{f}^2 = \int_{x\in [0,1]} \int_{y \in \R} |g(z)|^2 e^{-2\nu y^2 - 4\pi \alpha  y} dxdy =: \normnua{g}^2.
\end{equation}
 Moreover, if we denote by $\scalnua{g_1,g_2}$ the hermitian inner scalar product associated to the norm $\normnua{g}$, then for every $f_j(z) = e^{\frac\nu 2 z^2 + 2i\pi \alpha z} g_j(z) \in \Osd$; $j=1,2$, we have
\begin{equation}\label{scalar-scalrg}
\scalnu{f_1,f_2} = \int_{x\in [0,1]} \int_{y \in \R} g_1(z)\overline{g_2(z)} e^{-2\nu y^2 - 4\pi \alpha  y} dxdy = \scalnua{g_1,g_2}.
\end{equation}
\end{remark}


According to assertion iii) of the above proposition, we perform
\begin{equation}\label{Def-e}
\epf (z):= e^{\frac{\nu}2 z^2 + 2i\pi (\alpha + n) z}; \qquad n\in \Z.
\end{equation}
 Then, we assert the following

\newpage
\begin{theorem}  \label{ThmbasisO}
We have
\begin{itemize}
 \item [i)] The set of functions $\{\epf; \, n\in\Z\}$ constitutes an orthogonal system of $\Osd$ with
     \begin{equation}\label{norm-def3}
     \normnu{\epf}^2 = \left(\dfrac{\pi}{2\nu}\right)^{1/2} e^{\frac{2\pi^2}{\nu}(n+\alpha)^2}.
     \end{equation}

 \item [ii)] 
     A function
     $
     f(z):= 
      \sum\limits_{n\in\Z} a_n \epf (z)
     $ 
     belongs to $\Osd$ if and only if 
     \begin{equation}\label{normf}
     \normnu{f}^2 = \left(\dfrac{\pi}{2\nu}\right)^{1/2} \sum\limits_{n\in\Z} e^{\frac{2\pi^2}{\nu}(n+\alpha)^2}  |a_n|^2  < +\infty .
     \end{equation}
\end{itemize}
\end{theorem}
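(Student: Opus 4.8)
The plan is to compute the inner product $\scalnu{\epf,\eqf}$ directly using the isometry from Remark~\ref{Rem-normg}, which reduces everything to a one-dimensional Gaussian integral in the $y$-variable after the $x$-integration produces a Kronecker delta. Concretely, writing $\epf(z) = e^{\frac\nu2 z^2 + 2i\pi\alpha z} g_n(z)$ with $g_n(z) = e^{2i\pi n z}$, formula \eqref{scalar-scalrg} gives
\begin{equation*}
\scalnu{\epf,\eqf} = \int_{0}^{1}\int_{\R} e^{2i\pi(n-k)x} e^{-2\pi(n-k)y}\, e^{-2\nu y^2 - 4\pi\alpha y}\, dx\, dy.
\end{equation*}
First I would carry out the $x$-integration: $\int_0^1 e^{2i\pi(n-k)x}dx = \delta_{n,k}$, which already establishes orthogonality, i.e. the first half of assertion i). It remains to evaluate the $y$-integral when $n=k$, namely $\int_\R e^{-2\nu y^2 - 4\pi\alpha y}dy$. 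Completing the square in the exponent, $-2\nu y^2 - 4\pi\alpha y = -2\nu\left(y + \frac{\pi\alpha}{\nu}\right)^2 + \frac{2\pi^2\alpha^2}{\nu}$, and the standard Gaussian identity $\int_\R e^{-2\nu t^2}dt = \sqrt{\pi/(2\nu)}$ yields $\normnu{\eqf}^2 = \sqrt{\pi/(2\nu)}\, e^{2\pi^2\alpha^2/\nu}$.

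This, however, does not yet match \eqref{norm-def3}, which carries the factor $e^{\frac{2\pi^2}{\nu}(n+\alpha)^2}$ rather than $e^{\frac{2\pi^2}{\nu}\alpha^2}$; the discrepancy is resolved by noting that the cross term $e^{-2\pi(n-k)y}$ above was dropped in the diagonal case $n=k$, so in fact one should keep general $n$ in the exponent before specializing. Redoing the $y$-integral with the full exponent $-2\nu y^2 - 4\pi\alpha y - 2\pi(n-k)y$ and only then setting $n=k$ is circular; the clean route is to observe that the $g_n$ are \emph{not} the relevant objects for the $y$-weight — rather, one should complete the square in $z$ directly. So instead I would expand $|\epf(z)|^2 e^{-\nu|z|^2}$ with $z = x+iy$: since $\Re(z^2) = x^2 - y^2$ and $\Re(2i\pi(\alpha+n)z) = -2\pi(\alpha+n)y$, one gets $|\epf(z)|^2 e^{-\nu|z|^2} = e^{\nu(x^2-y^2)}e^{-4\pi(\alpha+n)y}e^{-\nu(x^2+y^2)} = e^{-2\nu y^2 - 4\pi(\alpha+n)y}$, which is $x$-independent; then $\normnu{\epf}^2 = \int_0^1 dx \int_\R e^{-2\nu y^2 - 4\pi(\alpha+n)y}dy$, and completing the square gives exactly \eqref{norm-def3}. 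The off-diagonal case is handled the same way: $\epf(z)\overline{\eqf(z)}e^{-\nu|z|^2}$ contributes the oscillatory factor $e^{2i\pi(n-k)x}$ whose $x$-integral vanishes, so orthogonality survives.

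For assertion ii), I would use the absolute and uniform convergence on compacta of the expansion \eqref{expansion} (Proposition~\ref{Prop-chara}) together with the orthogonality from i) to justify integrating term by term over the strip $S = [0,1]\times\R$. For a finite partial sum $f_N = \sum_{|n|\le N} a_n \epf$ the Pythagorean identity gives $\normnu{f_N}^2 = \sqrt{\pi/(2\nu)}\sum_{|n|\le N} e^{\frac{2\pi^2}{\nu}(n+\alpha)^2}|a_n|^2$; letting $N\to\infty$ and invoking monotone convergence shows that $\normnu{f}^2$ equals the (possibly infinite) series on the right, so $f \in \Osd$ exactly when that series converges. The main obstacle is the interchange of sum and integral in the converse direction — i.e. showing that if the series \eqref{normf} converges then the function defined by \eqref{expansion} genuinely lies in $\Osd$ with the stated norm; this requires a dominated-convergence or Fatou argument on the strip $S$, using the fact that the $\epf$ are mutually orthogonal with the computed norms so that the partial sums form a Cauchy sequence in $\Osdchi$, and that norm convergence plus local uniform convergence of the series pin down the limit as the entire function \eqref{expansion}.
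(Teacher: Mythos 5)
Part i) of your proposal is, after the detour, correct and coincides with the paper's computation: expanding $\epf(z)\overline{e^{\alpha,\nu}_m(z)}\,e^{-\nu|z|^2}$ directly gives $e^{2i\pi(n-m)x}\,e^{-2\nu y^2-2\pi(2\alpha+n+m)y}$, the $x$-integral produces $\delta_{n,m}$, and the Gaussian integral yields \eqref{norm-def3}. Note, though, that your first attempt failed only because of an algebra slip: $g_n(z)\overline{g_m(z)}=e^{2i\pi(n-m)x}e^{-2\pi(n+m)y}$, not $e^{2i\pi(n-m)x}e^{-2\pi(n-m)y}$; with the correct cross term the diagonal case $n=m$ immediately gives the factor $e^{\frac{2\pi^2}{\nu}(n+\alpha)^2}$, and there is nothing ``circular'' to repair. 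The comment about circularity is spurious, but your final route via $|\epf(z)|^2e^{-\nu|z|^2}=e^{-2\nu y^2-4\pi(n+\alpha)y}$ is sound.

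The genuine gap is in ii), precisely in the direction the paper singles out as the one requiring proof: $f\in\Osd$ implies \eqref{normf}. You pass from the partial-sum identity for $\normnu{f_N}^2$ to $\normnu{f}^2=\left(\pi/2\nu\right)^{1/2}\sum_n e^{\frac{2\pi^2}{\nu}(n+\alpha)^2}|a_n|^2$ ``by monotone convergence'', but monotone convergence does not apply: $|f_N|^2$ is not monotone in $N$, and the only convergence available, uniform on compacta from \eqref{expansion}, gives no control of the tail of the unbounded strip $S=[0,1]\times\R$; knowing $\normnu{f}<+\infty$ does not by itself give $\normnu{f_N}\to\normnu{f}$ --- that is essentially what has to be proved. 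Fatou only yields $\normnu{f}^2\le\liminf_N\normnu{f_N}^2$, the wrong inequality for this direction, and your Cauchy-sequence argument presupposes convergence of the series \eqref{normf}, so it serves only the ``if'' direction (where it does work, via completeness of $L^2(S;e^{-\nu|z|^2}\mes)$ and an a.e.-convergent subsequence). The missing idea, which is the paper's, is to work at fixed $y$: write $f(z)=e^{\frac\nu2 z^2+2i\pi\alpha z}g(z)$, use \eqref{norm-def2g}, observe that for each fixed $y$ the continuous periodic function $x\mapsto g(x,y)$ has Fourier coefficients $a_ne^{-2\pi ny}$, so Parseval gives $\int_0^1|g(x,y)|^2dx=\sum_n|a_n|^2e^{-4\pi ny}$, and then interchange sum and $y$-integration by Tonelli, where nonnegativity makes the interchange legitimate; this yields $\normnu{f}^2=\sum_n|a_n|^2\normnu{\epf}^2$ with both sides simultaneously finite or infinite, proving both implications at once. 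Alternatively you could first establish $\scalnu{f,\epf}=a_n\normnu{\epf}^2$ (which itself requires a justified Fubini interchange over $S$, e.g. via a Cauchy--Schwarz bound) and then combine Bessel's inequality with Fatou; but some such argument is indispensable and is absent from your sketch.
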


\begin{proof}
For the proof of i), note first that $\epf(z)= e^{\frac{\nu}2 z^2 + 2i\pi \alpha z} e^{ 2i\pi n z} $ belongs clearly to $\Os$.
Moreover, in view of \eqref{scalar-scalrg}, we get
 \begin{equation}\label{scalarnorm}
\scalnu{\epf ,\eqf} = \left(\int_0^1e^{ 2i\pi(n-m) x} dx\right) \left(\int_{-\infty}^{+\infty}
 e^{-2\nu y^2 - 2\pi (2\alpha +n+m)y} dy\right) .
\end{equation}
Thence, it is obvious that $\scalnu{\epf ,\eqf} = 0$ whenever $n\ne m$, while for $n=m$ \eqref{scalarnorm} reduces to
 \begin{equation*}
\normnu{\epf}^2 =  \int_{-\infty}^{+\infty}  e^{-2\nu y^2 - 4\pi (\alpha+n) y} dy .
\end{equation*}
Hence, we obtain the desired result \eqref{norm-def3} when taking $a=2\nu$ and $b=-4\pi(n+\alpha)$
in the following standard lemma for the Gaussian integrals
\begin{lemma}\label{lem-GaussIntegral}
For every given $a>0$ and $b\in \C$, we have
 \begin{equation} \label{GaussIntegral}
  \int_{-\infty}^{+\infty} e^{-a y^2 + by} dy = \left({\frac{\pi}{a}}\right)^{1/2} e^{b^2/4a} .
  \end{equation}
  \end{lemma}

What needed to prove ii) is the "{\it only if}": According to the previous Proposition \eqref{Prop-chara}, any function $f\in\Osd$ can be written as
     \begin{equation*}
     f(z):=e^{\frac{\nu}2 z^2 + 2i\pi \alpha z} g(z)
     \end{equation*}
     with $\normnu{f}<+\infty$, where $g(z)=g(x,y)$ is a $\Z$-simply periodic function given by
     \begin{equation*}
     g(z) =  \sum\limits_{n\in\Z} a_n e^{ 2i\pi nz} .
     \end{equation*}
     Thus, in view of \eqref{norm-def2g} and upon the use of the Fubini theorem, we obtain
     \begin{align}
     \normnu{f}^2 &= \int_{x\in [0,1]} \int_{y \in \R} |g(x,y)|^2 e^{-2\nu y^2 - 4\pi \alpha  y} dxdy\nonumber \\
                        &=  \int_{-\infty}^{+\infty}\left(\int_0^1 |g(x,y)|^2 dx\right) e^{-2\nu y^2 - 4\pi \alpha  y} dy . \label{Fubini}
     \end{align}
     Therefore, for every fixed $y\in\R$, we have
     $$ \int_0^1 |g(x,y)|^2  dx <+\infty.$$
    This means that the periodic function (in the $x$-direction)
     $$ x \longmapsto g(x,y) = \sum\limits_{n\in\Z} \left( a_n e^{- 2\pi ny}\right) e^{ 2i\pi nx} $$
     belongs to the Hilbert space $L^2([0,1];dx)$, where the series converges uniformly on 
     $[0,1]$ for every fixed $y\in\R$.
     Thus, we may apply the Parseval identity to get
      $$ \int_0^1 |g(x,y)|^2 dx =  \sum\limits_{n\in\Z} |a_n|^2  e^{- 4\pi ny}.$$
     Substituting this in \eqref{Fubini} leads to
     \begin{align*}
     \normnu{f}^2  &=  \sum\limits_{n\in\Z} |a_n|^2  \int_{-\infty}^{+\infty}  e^{-2\nu y^2 - 4\pi (\alpha+n)  y} dy\\
                   &=  \sum\limits_{n\in\Z} |a_n|^2  \normnu{\epf}^2 \\
                   &= \left(\dfrac{\pi}{2\nu}\right)^{1/2} \sum\limits_{n\in\Z} e^{\frac{2\pi^2}{\nu}(n+\alpha)^2} |a_n|^2.
     \end{align*}
This proves ii).
\end{proof}

The following result shows that $\Osd$ is a reproducing kernel Hilbert space, whose reproducing kernel can expressed explicitly in terms of the modified theta function $\theta_{\alpha,0}$. Namely, we have

\begin{theorem}
\begin{itemize}
\item [i)] Let $f\in \Osd$. Then, for every $z\in\C$, we have the following estimation
     \begin{equation}\label{estimation}
     |f(z)| \leq  \normnu{f}  \left( \left(\dfrac{2\nu}{\pi}\right)^{1/2} e^{\frac{\nu}2 (z^2+\overline{z}^2)} \theta_{\alpha,0} \left(z-\overline{z} \bigg | \frac{2i\pi}{\nu} \right)\right)^{1/2}.
     \end{equation}
     In particular for every compact set $K$ of $\C$ there exists a constant $C_K\geq 0$ such that
     \begin{equation}\label{estimationK}
     |f(z)| \leq  C_K \normnu{f}; \qquad z\in K.
     \end{equation}

\item [ii)] $\Osd$ is a Hilbert space with the orthonormal basis
\begin{equation}\label{orthonorm-basis} 
\psin (z) :=  \left(\dfrac{2\nu}{\pi}\right)^{1/4}  e^{\frac{\nu}2 z^2} e^{-\frac{\pi^2}{\nu}(n+\alpha)^2 + 2i\pi (\alpha + n) z}; \quad n\in\Z.
\end{equation}

\item [iii)] $\Osd$ admits a reproducing kernel function $\Kkernel(z,w)$, given explicitly in terms of the Reimann theta function $\theta_{\alpha,0}$, as
  \begin{equation}\label{Kernel}
 \Kkernel(z,w) = \left(\dfrac{2\nu}{\pi}\right)^{1/2} e^{\frac{\nu}{2}(z^2+ {\overline{w}}^2)}
 \theta_{\alpha,0} \left(z-\overline{w} \bigg |  \dfrac{2i\pi}{\nu}\right).
 \end{equation}
 \end{itemize}
 \end{theorem}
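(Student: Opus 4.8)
The plan is to derive all three parts from Theorem~\ref{ThmbasisO} together with a single explicit evaluation of the diagonal sum $\sum_{n\in\Z}|\psin(z)|^{2}$, which turns out to be exactly the theta expression appearing in \eqref{estimation}.

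\emph{Part i).} I would fix $f(z)=\sum_{n}a_{n}\epf(z)\in\Osd$ and apply the Cauchy--Schwarz inequality to the expansion (absolutely convergent by Proposition~\ref{Prop-chara}), writing $|a_{n}\epf(z)|=\bigl(|a_{n}|\,\normnu{\epf}\bigr)\bigl(|\epf(z)|/\normnu{\epf}\bigr)$. Using the norm identity \eqref{normf} this gives at once $|f(z)|\le\normnu{f}\,\bigl(\sum_{n}|\psin(z)|^{2}\bigr)^{1/2}$ with $\psin=\epf/\normnu{\epf}$. It then remains to compute the last sum. Writing $z=x+iy$, from \eqref{Def-e} and \eqref{norm-def3} one has $|\psin(z)|^{2}=(2\nu/\pi)^{1/2}e^{-2\pi^{2}(n+\alpha)^{2}/\nu}\,e^{\nu(x^{2}-y^{2})-4\pi(n+\alpha)y}$; pulling out the $n$-independent factor $e^{\nu(x^{2}-y^{2})}=e^{\frac{\nu}{2}(z^{2}+\overline{z}^{2})}$ and comparing the remaining series with \eqref{RiemannTheta}, the substitution $\tau=2i\pi/\nu$, $\beta=0$ identifies it with $\theta_{\alpha,0}(z-\overline{z}\,|\,2i\pi/\nu)$ (indeed $i\pi(n+\alpha)^{2}(2i\pi/\nu)=-2\pi^{2}(n+\alpha)^{2}/\nu$ and $2i\pi(n+\alpha)(z-\overline z)=-4\pi(n+\alpha)y$). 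Since $\nu>0$ we have $\Im(2i\pi/\nu)>0$, so this theta series converges absolutely and locally uniformly, as recalled after \eqref{RiemannTheta}; this yields \eqref{estimation}, and since the bounding function is continuous in $z$ it is bounded on any compact $K$, giving \eqref{estimationK}.

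\emph{Part ii).} Combining Proposition~\ref{Prop-chara}(iii) with \eqref{normf}, the assignment $f\mapsto(a_{n})_{n}$ embeds $\Osd$ isometrically into the weighted sequence space $\ell^{2}\bigl(\Z,(\normnu{\epf}^{2})_{n}\bigr)$. For surjectivity I would take any such square-summable sequence $(a_{n})$ and apply the estimate from part i) term by term: on a compact $K$, $\sum_{n}|a_{n}\epf(z)|\le C_{K}\bigl(\sum_{n}|a_{n}|^{2}\normnu{\epf}^{2}\bigr)^{1/2}$, so $\sum_{n}a_{n}\epf$ converges absolutely and locally uniformly to an entire function satisfying \eqref{FuncEq}, hence lying in $\Os$; Fatou's lemma applied to the (pairwise orthogonal) partial sums places it in $\Osd$, and the same argument on the tails shows the series converges in $\normnu{\cdot}$, so the norm identity holds with equality. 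Thus the isometry is onto, $\Osd$ is complete, and $\{\psin\}_{n}$, being the image of the canonical orthonormal basis of the weighted sequence space, is an orthonormal basis of $\Osd$; formula \eqref{orthonorm-basis} is then just $\psin=\epf/\normnu{\epf}$ together with \eqref{norm-def3}.

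\emph{Part iii).} By part i) the point evaluations $f\mapsto f(z)$ are bounded on $\Osd$, so $\Osd$ is a reproducing kernel Hilbert space with $\Kkernel(z,w)=\sum_{n}\psin(z)\overline{\psin(w)}$. Inserting \eqref{orthonorm-basis}, using $\overline{e^{\frac{\nu}{2}w^{2}+2i\pi(\alpha+n)w}}=e^{\frac{\nu}{2}\overline{w}^{2}-2i\pi(\alpha+n)\overline{w}}$ (recall $\alpha,n,\nu$ are real), factoring out $(2\nu/\pi)^{1/2}e^{\frac{\nu}{2}(z^{2}+\overline{w}^{2})}$ and recognizing the leftover series $\sum_{n}e^{-2\pi^{2}(n+\alpha)^{2}/\nu+2i\pi(\alpha+n)(z-\overline{w})}$ as $\theta_{\alpha,0}(z-\overline{w}\,|\,2i\pi/\nu)$ exactly as in part i) (with $z-\overline w$ replacing $z-\overline z$), one obtains \eqref{Kernel}; absolute convergence is again guaranteed by $\Im(2i\pi/\nu)>0$. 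The only real content here is the theta identification in part i); the lone analytic point that needs care is the surjectivity step in part ii), which is precisely why I would establish part i) first so that its pointwise bound is available.
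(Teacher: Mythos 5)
Your proposal is correct, and parts i) and iii) coincide with the paper's own argument: Cauchy--Schwarz applied to the expansion $f=\sum_n a_n\epf$, the identification $\sum_n|\psin(z)|^2=\left(\frac{2\nu}{\pi}\right)^{1/2}e^{\frac{\nu}{2}(z^2+\overline{z}^2)}\,\theta_{\alpha,0}\left(z-\overline{z}\,\big|\,\frac{2i\pi}{\nu}\right)$ via $\tau=2i\pi/\nu$, $\beta=0$, and then the kernel $\Kkernel(z,w)=\sum_n\psin(z)\overline{\psin(w)}$ computed the same way with $z-\overline{w}$ in place of $z-\overline{z}$. Where you genuinely diverge is the completeness step in part ii). The paper proves completeness metrically: a Cauchy sequence in $\Osd$ is, by the compact-set bound \eqref{estimationK}, uniformly Cauchy on compacta, hence converges locally uniformly to an entire function satisfying \eqref{FuncEq}; finiteness of its norm is then obtained by viewing the sequence in $L^2(S;e^{-\nu|z|^2}\mes)$ and extracting an a.e.\ convergent subsequence. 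You instead exhibit $\Osd$ as isometrically isomorphic to the weighted sequence space $\ell^2\bigl(\Z,(\normnu{\epf}^2)_n\bigr)$ and prove surjectivity directly: given square-summable weighted coefficients, the part i) bound applied termwise gives locally uniform absolute convergence of $\sum_n a_n\epf$ to an entire function satisfying \eqref{FuncEq}, and Fatou applied to the partial sums (and to the tails) puts the limit in $\Osd$ with the norm identity holding with equality. Both arguments rest on the same pointwise estimate; yours has the added benefit of making explicit that every weighted $\ell^2$ sequence actually produces an element of $\Osd$ and that the expansion converges in norm, which is exactly the point the paper treats rather tersely when it asserts that the normalized system $\psin$ is a basis "according to iii) of Proposition \ref{Prop-chara}", while the paper's route verifies completeness of the space without passing through surjectivity. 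Your argument is sound as written.
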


 \begin{proof}
The estimation \eqref{estimation} in i) can be handled using the Cauchy-Schwarz inequality. Indeed, for $f= \sum\limits_{n\in\Z} a_n \epf (z) \in \Osd$, we have
\begin{align*} |f(z)| 
                     &\leq  \left(\sum\limits_{n\in\Z} |a_n|^2 \normnu{\epf}^2 \right)^{1/2}
                      \left(\sum\limits_{n\in\Z} |\psin (z)|^2   \right)^{1/2}\\
                     & \leq  \normnu{f}
                     \left( \left(\dfrac{2\nu}{\pi}\right)^{1/2} |e^{\frac{\nu}2 z^2 }|^2 \sum\limits_{n\in\Z}   e^{-\frac{2\pi^2}{\nu}(n+\alpha)^2 + 2i\pi (\alpha + n) (z-\overline{z}) }  \right)^{1/2}.
\end{align*}
The sum in the right hand side is bounded on compact subsets of $\C$ and prepared to be written in terms of the modified theta series defined by
\eqref{RiemannTheta}.
In fact, for $\beta=0$ and $\tau=2i\pi /\nu$, we obtain
$$
|f(z)| \leq   \normnu{f} \left( \left(\dfrac{2\nu}{\pi}\right)^{1/2} e^{\frac{\nu}2 (z^2+\overline{z}^2)}
\theta_{\alpha,0} \left(z-\overline{z} \bigg | \frac{2i\pi}{\nu} \right)\right)^{1/2} .
$$
This ends the proof of i).

 What needed to prove ii) is completeness. For this let $(f_p)_{p}$ be a Cauchy sequence in $\Osd$.
  Then from i), we know that for any compact set $K\subset \C$, there is $C_K$ such that
$$|f_p(z) - f_q(z)| \leq C_K \normnu{f_p-f_q}.$$
Whence, the sequence $(f_p)_{p}$ of entire functions satisfying the functional equation
\eqref{FuncEq} is uniformly Cauchy on compact subsets, and converges pointwise and uniformly
to a function $f$ defined on the whole $\C$, which is entire and satisfies
  \begin{equation*}
f(z+m) = e^{ 2i\pi \alpha m} e^{\nu (z+\frac{m}2) m} f(z); \quad \forall z\in\C, \, \forall m\in\Z.
\end{equation*}
 We need only to prove that $\normnu{f} < +\infty$. For this, note that since
$$
\normnu{f_p}^2= \int_{S} |f_p(z)|^2 e^{-\nu|z|^2} \mes(z) < +\infty,
$$
then $f_p|_{_{S}}$ belongs to the Hilbert space $\mathcal{H}_{S}:= L^2(S;e^{-\nu|z|^2} \mes)$.
Thence, $(f_p)_{p}$ is also a Cauchy sequence in $\mathcal{H}_{S}$. Therefore,
$(f_p)_{p}$ converges to a function $\phi_S \in \mathcal{H}_{S}$ in the norm of $\mathcal{H}_{S}$.
 Furthermore, there exists a subsequence $(f_{p_k})_k$ of $(f_p)_p$  converging to $\phi_S$ pointwise almost everywhere on $S$.
 Thus, we have $ f|_{_{S}} =  \phi_S \in \mathcal{H}_{S}$  almost everywhere on $S$, and therefore,
 $$
 \int_{S} |f(z)|^2 e^{-\nu|z|^2} \mes(z) = \int_{S} |\phi_S(z)|^2 e^{-\nu|z|^2} \mes(z) < +\infty .
 $$
 To complete the proof of ii), we normalize the orthogonal system $\epf$ (see \eqref{Def-e} and \eqref{norm-def3}), so as
 \begin{equation*}\label{orthonorm-basis1} 
\psin (z) :=  \left(\dfrac{2\nu}{\pi}\right)^{1/4}  e^{\frac{\nu}2 z^2} e^{-\frac{\pi^2}{\nu}(n+\alpha)^2 + 2i\pi (\alpha + n) z}
\end{equation*}
form an orthonormal system, which turns out to be a basis of the Hilbert space $\Osd$ according to iii) of Proposition \ref{Prop-chara}.

 The proof of iii) follows using again \eqref{estimationK}, which means that the evaluation map $f \mapsto f(z)$ is continuous,
 and therefore $\Osd$ is a reproducing kernel Hilbert space. Its reproducing kernel function is given by
$ \Kkernel(z,w) = \sum\limits_{n\in\Z} \psin (z) {\overline{\psin (w)}}$.
Moreover, we have
\begin{align*}  \Kkernel(z,w)  &=  \left(\dfrac{2\nu}{\pi} \right)^{1/2} e^{\frac{\nu}2 (z^2+{\overline{w}}^2)}
             \sum_{n\in\Z}   e^{-\frac{2\pi^2}{\nu}(n+\alpha)^2 + 2i\pi (\alpha + n) (z-\overline{w})}   \\
     &= \left(\dfrac{2\nu}{\pi}\right)^{1/2}  e^{\frac{\nu}{2}(z^2+ {\overline{w}}^2)} \theta_{\alpha,0}
     \left(z-\overline{w} \bigg |  \dfrac{2i\pi}{\nu}\right).
\end{align*}
\end{proof}

\begin{remark}
The multiplication operator $[M_{\nu,\alpha} (f)](z) := e^{\frac{\nu}2 z^2 + 2i\pi  \alpha   z} f(z)$
defines an isometric mapping from the Hilbert space $\Osd$ onto its companion  the Hilbert space of all entire simply periodic functions that are square integrable with respect to the norm $\normnua{g}$ in \eqref{norm-def2g}.
\end{remark}

We conclude this section with the following

\begin{proposition}\label{prop-thetaO} Let $\alpha,\beta \in \R$ and $\tau \in \C$ such that $\Im(\tau)>0$. Then, the holomorphic function
 \begin{equation}\label{holom-theta}
 f^{\nu,\tau}_{\alpha,\beta}(z):=  e^{\frac \nu 2 z^2} \theta_{\alpha,\beta}(z|\tau)
 \end{equation}
 belongs to $\Osd$ if and only if $\Im(\tau)> \pi/\nu$.
\end{proposition}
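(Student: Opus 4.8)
The plan is to realize $f^{\nu,\tau}_{\alpha,\beta}$ explicitly as a series in the orthogonal system $\{\epf\}_{n\in\Z}$ of Theorem \ref{ThmbasisO} and then read off the convergence criterion from assertion ii) of that theorem. First I would substitute the defining series \eqref{RiemannTheta} of $\theta_{\alpha,\beta}$ into \eqref{holom-theta} and distribute the Gaussian factor $e^{\frac\nu2 z^2}$ over the terms, obtaining
\begin{equation*}
f^{\nu,\tau}_{\alpha,\beta}(z) = \sum_{n\in\Z} a_n\, \epf(z), \qquad a_n := e^{i\pi(n+\alpha)^2\tau + 2i\pi(n+\alpha)\beta},
\end{equation*}
with $\epf$ as in \eqref{Def-e}. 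Because $\Im(\tau)>0$ forces the $|a_n|$ to decay super-geometrically, while each $\epf$ grows at most exponentially in $|n|$ on compacta, this series converges absolutely and uniformly on compact subsets of $\C$; hence $f^{\nu,\tau}_{\alpha,\beta}$ is entire and lies in $\Os$ by assertion iii) of Proposition \ref{Prop-chara}, and the expansion above is precisely the one appearing in \eqref{expansion}.

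The next step is a short computation of the coefficient moduli. Since $\alpha,\beta\in\R$, the factor $e^{2i\pi(n+\alpha)\beta}$ is unimodular, so
\begin{equation*}
|a_n|^2 = e^{2\Re\left(i\pi(n+\alpha)^2\tau\right)} = e^{-2\pi(n+\alpha)^2\Im(\tau)}.
\end{equation*}
Feeding this into the characterization \eqref{normf} of Theorem \ref{ThmbasisO} ii), the function $f^{\nu,\tau}_{\alpha,\beta}$ belongs to $\Osd$ if and only if
\begin{equation*}
\normnu{f^{\nu,\tau}_{\alpha,\beta}}^2 = \left(\frac{\pi}{2\nu}\right)^{1/2}\sum_{n\in\Z} e^{\frac{2\pi^2}{\nu}(n+\alpha)^2}\,|a_n|^2 = \left(\frac{\pi}{2\nu}\right)^{1/2}\sum_{n\in\Z} e^{2\pi\left(\frac{\pi}{\nu}-\Im(\tau)\right)(n+\alpha)^2} < +\infty.
\end{equation*}

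Finally I would analyze this Gaussian-type series. Setting $c := 2\pi\bigl(\tfrac{\pi}{\nu}-\Im(\tau)\bigr)$, it equals $\sum_{n\in\Z}e^{c(n+\alpha)^2}$: when $c>0$ the general term tends to $+\infty$, when $c=0$ every term equals $1$ so the sum is $+\infty$, and when $c<0$ the sum is dominated by a convergent Gaussian series. Hence finiteness of the norm is equivalent to $c<0$, i.e.\ to $\Im(\tau)>\pi/\nu$, which is the asserted equivalence. There is no substantial obstacle in this argument; the only point that deserves a moment's care is the threshold case $\Im(\tau)=\pi/\nu$, where the exponential weight $e^{\frac{2\pi^2}{\nu}(n+\alpha)^2}$ of the basis vectors exactly cancels the decay of $|a_n|^2$ and the resulting series $\sum_{n\in\Z}1$ diverges, so the inequality in the statement must indeed be strict.
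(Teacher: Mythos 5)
Your proof is correct and follows essentially the same route as the paper: both expand $f^{\nu,\tau}_{\alpha,\beta}$ in the modes $\epf$ (the paper via the periodic factor $g^{\nu,\tau}_{\alpha,\beta}$ and Remark \ref{Rem-normg}, you by citing Theorem \ref{ThmbasisO} ii)) and reduce membership in $\Osd$ to the convergence of $\sum_{n\in\Z} e^{2\pi\left(\frac{\pi}{\nu}-\Im(\tau)\right)(n+\alpha)^2}$, which holds precisely when $\Im(\tau)>\pi/\nu$. The only difference is cosmetic: the paper redoes the Parseval/Gaussian-integral computation explicitly and checks the quasi-periodicity directly, while you delegate these steps to Proposition \ref{Prop-chara} iii) and Theorem \ref{ThmbasisO} ii), which is legitimate.
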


 \begin{proof}
 Using the fact that
 $ \theta_{\alpha,\beta}(z+m|\tau) = e^{2i\pi \alpha m} \theta_{\alpha,\beta}(z|\tau) $,
 we get easily by direct computation that $f^{\nu,\tau}_{\alpha,\beta}(z)$ satisfies the quasi-periodic property
 $$ f^{\nu,\tau}_{\alpha,\beta}(z+m) = e^{2i\pi \alpha m} e^{\frac \nu 2 z^2 + \nu\left(z+\frac m2\right)m } f^{\nu,\tau}_{\alpha,\beta}(z)$$
 for every $m\in\Z$. Note also that $f^{\nu,\tau}_{\alpha,\beta}(z)$ can be rewritten in the following form
  \begin{equation}
 f^{\nu,\tau}_{\alpha,\beta}(z):= e^{2i\pi\alpha\beta} e^{\frac \nu 2 z^2 + 2i\pi \alpha z } g_{\alpha,\beta}^{\nu,\tau}(z),
 \end{equation}
 where $ g_{\alpha,\beta}^{\nu,\tau}$ is the $\Z$-simply periodic holomorphic function given by
   \begin{equation}
  g_{\alpha,\beta}^{\nu,\tau}(z) =  \sum_{n\in\Z} e^{i\pi(n+\alpha)^2\tau + 2i\pi \beta n } e^{2i\pi n z }.
 \end{equation}
 According to \eqref{norm-def2g} in Remark \ref{Rem-normg}, we obtain
  \begin{align*}
\normnu{ f^{\nu,\tau}_{\alpha,\beta}}^2
&= \int_{x\in [0,1]} \int_{y \in \R} |g_{\alpha,\beta}^{\nu,\tau}(x,y)|^2 e^{-2\nu y^2 - 4\pi \alpha  y} dxdy \\
  &= \sum_{n\in\Z} |e^{i\pi(n+\alpha)^2\tau + 2i\pi \beta n }|^2   \int_{x\in [0,1]} \int_{y \in \R} |e^{2i\pi n z }|^2 e^{-2\nu y^2 - 4\pi \alpha  y} dxdy \\
  &= \sum_{n\in\Z} e^{-2\pi(n+\alpha)^2\Im(\tau)}  \int_{-\infty}^{+\infty}  e^{-2\nu y^2 - 4\pi(n+ \alpha)  y} dy,
\end{align*}
which, in view of \eqref{GaussIntegral}, gives rise to
   \begin{align*}
\normnu{ f^{\nu,\tau}_{\alpha,\beta}}^2
= \left(\frac{\pi}{2\nu}\right)^{1/2} \sum_{n\in\Z}  e^{-2\pi(n+\alpha)^2\left(\Im(\tau) - \frac{\pi}{\nu}\right)}  .
\end{align*}
Now using the well established fact that the convergence of the series $\sum\limits_{n\in\Z} e^{-a(n+\alpha)^2}$ is equivalent to $a>0$, we deduce that
$ f^{\nu,\tau}_{\alpha,\beta} \in \Osd$ if and only if $\Im(\tau) - ({\pi}/{\nu}) >0$. This completes the proof.
\end{proof}

\begin{remark}
Making appeal to the previous result 
and the reproducing property satisfied by the reproducing kernel $ \Kkernel(z,w)$, i.e.,
   \begin{equation}\label{Kernel-integral}
 f(z) =  \int_{w\in S}   \Kkernel(z,w) f(w) e^{-\nu |w|^2} \mes(w) ,
 \end{equation}
 for every $f\in \Osd$, one can assert that for every $z,\tau\in\C$ such that $\Im(\tau) >\pi/\nu$ and reals numbers $\alpha,\beta$, we have
\begin{equation*}
  \int_{w\in S} \theta_{\alpha,0} \left(z-\overline{w} \bigg |  \dfrac{2i\pi}{\nu}\right)  \theta_{\alpha,\beta} (w|\tau)
 e^{\frac{\nu}{2}{\overline{w}}^2-\nu|w|^2} \mes(w) = \left(\dfrac{\pi}{2\nu}\right)^{1/2} e^{-\nu z^2} \theta_{\alpha,\beta} (z|\tau) .
\end{equation*}
\end{remark}


\section{Connection of $\Osd$ to $\Lgamma$ through the Bargmann transform}

Fix $\alpha\in\R$ and set
$$ \chi_\alpha(m) : = e^{2i\pi \alpha m}; \quad m\in \Z.$$
Associated to the lattice $L:=\sqrt{2} \Z$ of $\R$, let denote by $\Lgamma$ 
the space of complex valued measurable functions on $\R$ satisfying the quasi-periodicity
     \begin{equation}\label{qp2}
        \varphi(q+\sqrt{2} m) = \chi_\alpha(m) \varphi(q);  \qquad \forall \forall  q\in\R, \forall m\in\Z,
     \end{equation}
and such that
        \begin{equation}\label{normVarphi}
        \normg{\varphi}^2 :=\int_0^{\sqrt{2}} |\varphi(q)|^2 dq <+\infty.
        \end{equation}
Then, it is easy to check the following proposition (whose the proof is omitted).

\begin{proposition}\label{prop-qpVarphi}
\begin{enumerate}
    \item[i)] A measurable function $\varphi:\R \longrightarrow \C$ belongs to $\Lgamma$ if and only if it can be written as
        \begin{equation}\label{fctVarphi}
       \varphi(q) = e^{\sqrt{2} i \pi \alpha q} g(q)
         \end{equation}
        for some $\sqrt{2}\Z$-periodic function $g$ (i.e., $g(q+\sqrt{2}m) = g(q)$; $\forall\forall q\in \R$ and $\forall m\in\Z$) such that
        $ \normg{\varphi}<+\infty$.
    \item[ii)]  $\Lgamma$ is a Hilbert space with the orthonormal basis
        \begin{equation}\label{fctVarphin}
       \varphi_n^{\alpha}(q):=2^{-1/4} e^{\sqrt{2}i\pi (n+\alpha)q} ; \quad q\in\R, n\in\Z.
        \end{equation}
    \item[iii)]
%
%
  Every $\varphi \in \Lgamma$ can be expanded as
     \begin{equation}\label{expVarphi}
      \varphi (q):= \sum\limits_{n\in\Z} a_n \varphi_n^{\alpha}(q),
      \end{equation}
 where $(a_n)_{n\in\Z}\subset \C$ is such that $\sum\limits_{n\in\Z} |a_n|^2 < +\infty$.
  \end{enumerate}
\end{proposition}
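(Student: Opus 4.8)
The strategy is to reduce the entire statement to the classical $L^2$ Fourier theory on the fundamental interval $[0,\sqrt 2]$, mimicking the proof of Proposition \ref{Prop-chara} in the simpler real-variable setting. Two elementary devices do all the work: factoring out the unimodular exponential $e^{\sqrt 2 i\pi\alpha q}$, and identifying a $(\sqrt 2\Z,\chi_\alpha)$-quasi-periodic function with its restriction to $[0,\sqrt 2]$.

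For part i), the ``if'' direction is the one-line check $\varphi(q+\sqrt 2 m)=e^{\sqrt 2 i\pi\alpha(q+\sqrt 2 m)}g(q+\sqrt 2 m)=e^{2i\pi\alpha m}e^{\sqrt 2 i\pi\alpha q}g(q)=\chi_\alpha(m)\varphi(q)$, while $\normg{\varphi}<+\infty$ is part of the hypothesis. For the ``only if'' direction, given $\varphi\in\Lgamma$ I would set $g(q):=e^{-\sqrt 2 i\pi\alpha q}\varphi(q)$ and verify, using \eqref{qp2}, that $g(q+\sqrt 2 m)=e^{-\sqrt 2 i\pi\alpha(q+\sqrt 2 m)}\chi_\alpha(m)\varphi(q)=g(q)$ for almost every $q\in\R$ and every $m\in\Z$; since $|e^{-\sqrt 2 i\pi\alpha q}|=1$ one has $\int_0^{\sqrt 2}|g(q)|^2\,dq=\normg{\varphi}^2<+\infty$. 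This also exhibits $g$ as uniquely determined by $\varphi$.

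For parts ii) and iii), I would observe that the restriction map $\varphi\mapsto\varphi|_{[0,\sqrt 2]}$ is an isometry of $\Lgamma$ onto $L^2([0,\sqrt 2],dq)$: it preserves norms by the definition of $\normg{\cdot}$, it is injective because \eqref{qp2} reconstructs $\varphi$ on all of $\R$ from its values on $[0,\sqrt 2]$, and it is surjective because any $L^2$ function on $[0,\sqrt 2]$ extends to a quasi-periodic function via \eqref{qp2}. Completeness of $L^2([0,\sqrt 2])$ then yields that $\Lgamma$ is a Hilbert space. Under this isometry $\varphi_n^\alpha$ corresponds to $q\mapsto 2^{-1/4}e^{\sqrt 2 i\pi(n+\alpha)q}$ on $[0,\sqrt 2]$; since $\{2^{-1/4}e^{\sqrt 2 i\pi n q}\}_{n\in\Z}$ is the standard orthonormal Fourier basis of $L^2([0,\sqrt 2])$ (the constant being fixed by $\int_0^{\sqrt 2}2^{-1/2}\,dq=1$) and multiplication by the unimodular function $e^{\sqrt 2 i\pi\alpha q}$ is a unitary operator on that space, the family $\{\varphi_n^\alpha\}_{n\in\Z}$ is again an orthonormal basis; alternatively, one computes $\int_0^{\sqrt 2}2^{-1/2}e^{\sqrt 2 i\pi(n-m)q}\,dq=\delta_{nm}$ directly. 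Part iii) is then the abstract Fourier expansion in this basis: $\varphi=\sum_{n\in\Z}a_n\varphi_n^\alpha$ with $a_n=\int_0^{\sqrt 2}\varphi(q)\overline{\varphi_n^\alpha(q)}\,dq$ and $\sum_{n\in\Z}|a_n|^2=\normg{\varphi}^2<+\infty$ by Parseval's identity.

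I do not anticipate a genuine obstacle: the whole argument is routine $L^2$ Fourier analysis, which is doubtless why the authors omit it. The only points requiring a little care in the write-up are bookkeeping ones --- the quasi-periodicity identities hold only up to null sets (the ``$\forall\forall$'' convention of Mumford), so the identification with $L^2$ of the fundamental domain and all pointwise equalities must be read modulo measure-zero sets, and the series in \eqref{expVarphi} converges in the $\Lgamma$-norm rather than pointwise, the usual caveat for $L^2$ Fourier series.
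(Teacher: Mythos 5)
Your proof is correct; the paper in fact omits the proof of this proposition (``it is easy to check \ldots whose the proof is omitted''), and your argument --- factoring out the unimodular factor $e^{\sqrt{2}i\pi\alpha q}$, identifying $\Lgamma$ isometrically with $L^{2}([0,\sqrt{2}],dq)$, and invoking the classical Fourier basis together with Parseval --- is precisely the routine reduction the authors have in mind, mirroring their Proposition \ref{Prop-chara} in the real-variable setting. Your remarks about the ``$\forall\forall$'' (almost-everywhere) reading of the quasi-periodicity and the $L^{2}$-convergence of the expansion \eqref{expVarphi} are the right bookkeeping caveats and nothing further is needed.
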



To state the main result of this section, we need to introduce the following

\begin{definition}\label{DefBargmTransf}
The Bargmann transform $\transfBarg$ of given $\varphi:\R \longrightarrow \C$
is defined as a function on $\C$ by
        \begin{align}\label{TransfBarg}
        [\transfBarg\varphi](z) = \left(\frac{\nu}{\pi}\right)^{3/4}
         \int_{-\infty}^{+\infty} \varphi(q) e^{-\frac{\nu}{2}(z^2+q^2) +\nu\sqrt{2}z q} dq
        \end{align}
or equivalently
        \begin{align}\label{TransfBargEqv}
        [\transfBarg\varphi](z) = \left(\frac{\nu}{\pi}\right)^{3/4} e^{\frac{\nu}{2}z^2}
         \int_{-\infty}^{+\infty} \varphi(q) e^{-\frac{\nu}{2}(q - \sqrt{2}z)^2} dq
        \end{align}
        provided that the integrals exist. This is the case if $\varphi$ is assumed to be a bounded function on $\R$ satisfying \eqref{qp2}.
\end{definition}

Thus the quantity $[\transfBarg \varphi_n^{\alpha}]$ is well defined
 and its explicit expression is given by the following

\begin{proposition}
We have
  \begin{equation}\label{TransfBargn}
      [\transfBarg \varphi_n^{\alpha}](z) = \left(\frac{2\nu}{\pi}\right)^{1/4}  e^{-\frac{\pi^2}{\nu} (n+\alpha)^2}\epf(z)
       =   \psin (z) .
      \end{equation}
\end{proposition}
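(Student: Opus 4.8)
The plan is to substitute the explicit form \eqref{fctVarphin} of $\varphi_n^{\alpha}$ into the definition \eqref{TransfBargEqv} of the Bargmann transform, reduce the resulting integral to a single Gaussian integral by a linear change of variable, and then invoke Lemma \ref{lem-GaussIntegral}. Since $|\varphi_n^{\alpha}(q)| = 2^{-1/4}$ is a constant, the integrand in \eqref{TransfBargEqv} is dominated by a constant multiple of $e^{-\frac{\nu}{2}(q-\sqrt2 z)^2}$, so the defining integral converges absolutely and $[\transfBarg \varphi_n^{\alpha}](z)$ is well defined for every $z\in\C$; this is exactly the case covered by Definition \ref{DefBargmTransf}, and it legitimizes both the change of variable and the appeal to the Gaussian formula below.

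First I would write
$$[\transfBarg \varphi_n^{\alpha}](z) = 2^{-1/4}\left(\frac{\nu}{\pi}\right)^{3/4} e^{\frac{\nu}{2}z^2} \int_{-\infty}^{+\infty} e^{\sqrt2\, i\pi (n+\alpha) q}\, e^{-\frac{\nu}{2}(q-\sqrt2 z)^2}\, dq$$
and perform the change of variable $u = q - \sqrt2\, z$. This pulls out the factor $e^{\sqrt2\, i\pi (n+\alpha)\sqrt2 z} = e^{2i\pi(n+\alpha) z}$ and leaves the integral $\int_{-\infty}^{+\infty} e^{-\frac{\nu}{2} u^2 + \sqrt2\, i\pi(n+\alpha) u}\, du$. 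Applying \eqref{GaussIntegral} with $a = \nu/2$ and $b = \sqrt2\, i\pi(n+\alpha)$, and noting $b^2/(4a) = -2\pi^2(n+\alpha)^2/(2\nu) = -\pi^2(n+\alpha)^2/\nu$, this integral equals $\left(2\pi/\nu\right)^{1/2} e^{-\pi^2(n+\alpha)^2/\nu}$.

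Finally I would collect the constants, using $2^{-1/4}\left(\nu/\pi\right)^{3/4}\left(2\pi/\nu\right)^{1/2} = \left(2\nu/\pi\right)^{1/4}$, and recognize $e^{\frac{\nu}{2}z^2 + 2i\pi(n+\alpha)z} = \epf(z)$ from \eqref{Def-e}. This gives $[\transfBarg \varphi_n^{\alpha}](z) = \left(2\nu/\pi\right)^{1/4} e^{-\pi^2(n+\alpha)^2/\nu}\,\epf(z)$, and comparison with the formula \eqref{orthonorm-basis} for $\psin$ identifies this with $\psin(z)$, as claimed. There is no genuine obstacle here beyond careful bookkeeping of the powers of $2$, $\pi$ and $\nu$; the only substantive point is the absolute convergence noted in the first paragraph.
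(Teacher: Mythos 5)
Your proposal is correct and follows essentially the same route as the paper's proof: substitute the explicit $\varphi_n^{\alpha}$ into \eqref{TransfBargEqv}, factor out $e^{2i\pi(n+\alpha)z}$ via the shift $q\mapsto q-\sqrt2 z$, and evaluate the remaining Gaussian integral with Lemma \ref{lem-GaussIntegral} (with $a=\nu/2$, $b=\sqrt2\,i\pi(n+\alpha)$), then collect constants. Your added remark on absolute convergence (boundedness of $\varphi_n^{\alpha}$) only makes explicit what the paper's Definition \ref{DefBargmTransf} already covers.
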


\begin{proof} Insertion of \eqref{fctVarphin} in \eqref{TransfBargEqv} infers
        \begin{align*}
        [\transfBarg\varphi_n^{\alpha}](z)
         &=  2^{-1/4}\left(\frac{\nu}{\pi}\right)^{3/4}e^{\frac{\nu}{2}z^2}\int_{-\infty}^{+\infty}
         e^{-\frac{\nu}{2}(q - \sqrt{2}z)^2 + \sqrt{2}i\pi (n+\alpha)q} dq\\
        &=  2^{-1/4}\left(\frac{\nu}{\pi}\right)^{3/4}e^{\frac{\nu}{2}z^2}e^{2i\pi (n+\alpha)z}\int_{-\infty}^{+\infty}
        e^{-\frac{\nu}{2}(q - \sqrt{2}z)^2 + \sqrt{2}i\pi (n+\alpha)(q - \sqrt{2}z)} dq.
     \end{align*}
Now, in view of \eqref{GaussIntegral} with $a=\nu/2$, $b= \sqrt{2}i\pi(n+\alpha) $ and $y = q - \sqrt{2}z$, it follows
        \begin{align*}
        [\transfBarg\varphi_n^{\alpha}](z) & =  \left(\frac{2\nu}{\pi}\right)^{1/4}  e^{-\frac{\pi^2}{\nu} (n+\alpha)^2}\epf(z) =     \psin (z).
        \end{align*}
\end{proof}

\begin{remark}
The Bargmann transform $\transfBarg$ maps the orthonormal basis $\varphi_n^{\alpha}$
 of the Hilbert space $\Lgamma$ onto the orthonormal $ \psin $ of the Hilbert space $\Osd$. This means that
  $\transfBarg$ is an isometric embedding from $\Lgamma$ into $\Osd$.
  \end{remark}

Some basic properties of the Bargmann transform $\transfBarg: \Lgamma \longrightarrow \Osd$ are discussed in the following

\begin{theorem} Let  $\varphi$ be a bounded function on $\R$ satisfying the ${\sqrt 2}\Z$-quasi-periodicity \eqref{qp2}. Then, we have
 \begin{itemize}
 \item[i)] The function $\psi^{\alpha,\nu} (z):=[\transfBarg \varphi](z)$ is entire and satisfies the functional equation
   \begin{equation}\label{FctEqBargm}
      \psi^{\alpha,\nu} (z+m)=\chi_\alpha(m) e^{\nu(z+\frac{m}{2})m} \psi^{\alpha,\nu} (z); \qquad \forall z\in\C, \, \forall m\in\Z.
      \end{equation}
 \item[ii)]  Let $\theta_{3}(\xi|\tau)=\theta(\xi|\tau)$ be the Jacobi theta series defined through \eqref{Jacobi3} and set
      \begin{equation}\label{KernelBargm}
      \Bkernel(z;q) :=  \left(\frac{\nu}{\pi}\right)^{3/4} e^{\frac{\nu}{2}z^2 - \nu\left(\frac{q}{\sqrt 2}-z\right)^2} \theta_{3}\left( \frac{i\nu}{\pi}\left(\frac{q}{\sqrt 2}-z\right) +\alpha \bigg| \frac{i\nu}{\pi}\right)
      \end{equation}
      for every $(z;q)\in \C\times \R$. Then, we have
      \begin{equation}\label{TransfBargm2}
      [\transfBarg\varphi](z)=  \int_0^{\sqrt{2}} \Bkernel(z;q) \varphi(q) dq; \quad z\in\C.
      \end{equation}
 \item[iii)] The kernel function $\Bkernel$ in \eqref{KernelBargm} displays
      \begin{equation}\label{TransfBargm2}
      \Bkernel(z+m;q+\sqrt{2}m')=\chi_\alpha(m) e^{\nu(z+\frac{m}{2})m} \Bkernel(z;q) \overline{\chi_\alpha(m') }
      \end{equation}
      for every $z\in\C$, $q\in\R$ and $m,m'\in\Z$.
 \end{itemize}
\end{theorem}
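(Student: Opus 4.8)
The plan is to treat the three assertions in the order stated; the substantive part is a single ``unfolding'' computation that rewrites the integral over $\R$ in \eqref{TransfBarg} as an integral over the period cell $[0,\sqrt2]$ against a Gaussian sum, which is then recognized as a value of the Jacobi theta series \eqref{Jacobi3}. For assertion i), holomorphy of $\psi^{\alpha,\nu}=\transfBarg\varphi$ follows from \eqref{TransfBarg}: for each fixed $q$ the integrand is entire in $z$, and since $\varphi$ is bounded the Gaussian factor $e^{-\nu q^2/2}$ makes the integral, together with all its $z$-derivatives, converge locally uniformly in $z$, so by Morera's theorem (equivalently, differentiation under the integral sign) $\transfBarg\varphi$ is entire on $\C$. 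For the functional equation I would compute $[\transfBarg\varphi](z+m)$ from \eqref{TransfBarg}, substitute $q\mapsto q+\sqrt2\,m$, use the quasi-periodicity \eqref{qp2} to pull out $\chi_\alpha(m)$, and verify by completing the square that
$$-\tfrac{\nu}{2}\big((z+m)^2+(q+\sqrt2\,m)^2\big)+\nu\sqrt2\,(z+m)(q+\sqrt2\,m)=-\tfrac{\nu}{2}(z^2+q^2)+\nu\sqrt2\,zq+\nu\big(z+\tfrac{m}{2}\big)m,$$
which gives \eqref{FctEqBargm} at once.

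For assertion ii), I would split $\R=\bigsqcup_{m'\in\Z}[\sqrt2\,m',\sqrt2\,(m'+1))$, translate each interval back to $[0,\sqrt2]$, and use \eqref{qp2} to replace $\varphi(q+\sqrt2\,m')$ by $\chi_\alpha(m')\varphi(q)$. Interchanging $\sum_{m'}$ with $\int_0^{\sqrt2}$ — legitimate since the $q$-integral of the $m'$-th term is $O(e^{-cm'^2})$ uniformly for $z$ in a compact set, $\varphi$ being bounded — identifies the kernel as $\Bkernel(z;q)=(\nu/\pi)^{3/4}\sum_{m'\in\Z}\chi_\alpha(m')\,e^{-\frac{\nu}{2}(z^2+(q+\sqrt2\,m')^2)+\nu\sqrt2\,z(q+\sqrt2\,m')}$. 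Completing the square turns the exponent into $\frac{\nu}{2}z^2-\nu(\frac{q}{\sqrt2}-z+m')^2$; factoring out $e^{\frac{\nu}{2}z^2-\nu(\frac{q}{\sqrt2}-z)^2}$ leaves $\sum_{m'\in\Z}e^{-\nu m'^2}e^{m'(-2\nu(\frac{q}{\sqrt2}-z)+2i\pi\alpha)}$, which is precisely $\theta_3(\xi\,|\,\tau)$ from \eqref{Jacobi3} with $\tau=i\nu/\pi$ (so that $i\pi\tau=-\nu$ and $\Im\tau=\nu/\pi>0$, ensuring convergence) and $2i\pi\xi=-2\nu(\frac{q}{\sqrt2}-z)+2i\pi\alpha$, i.e. $\xi=\frac{i\nu}{\pi}(\frac{q}{\sqrt2}-z)+\alpha$. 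This is exactly the expression \eqref{KernelBargm}, and the integral representation for $[\transfBarg\varphi]$ follows.

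For assertion iii), the cleanest route is to read the transformation law off the series form of the kernel obtained above, namely $\Bkernel(z;q)=(\nu/\pi)^{3/4}\,e^{\frac{\nu}{2}z^2}\sum_{n\in\Z}\chi_\alpha(n)\,e^{-\nu(\frac{q}{\sqrt2}-z+n)^2}$. Replacing $(z,q)$ by $(z+m,q+\sqrt2\,m')$ changes the $n$-th argument to $\frac{q}{\sqrt2}-z+(n+m'-m)$, so reindexing $n\mapsto n-m'+m$ produces the factor $\chi_\alpha(m-m')=\chi_\alpha(m)\overline{\chi_\alpha(m')}$, while $e^{\frac{\nu}{2}(z+m)^2}=e^{\frac{\nu}{2}z^2}e^{\nu(z+\frac{m}{2})m}$; multiplying the two gives the stated identity for $\Bkernel$. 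As a consistency check, integrating this identity against $\varphi$ over $[0,\sqrt2]$ and using that the $m'$-summed contribution is $\sqrt2\,\Z$-periodic in $q$ recovers \eqref{FctEqBargm}.

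I do not expect a genuine obstacle here. The only points needing care are the justification of the termwise manipulations — holomorphy under the integral sign in i) and the interchange of $\sum_{m'}$ with $\int_0^{\sqrt2}$ in ii) — both of which are controlled by the Gaussian decay in the integration variable together with the boundedness of $\varphi$; everything else is a routine completion of squares and a reindexing of the theta sum.
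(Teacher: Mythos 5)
Your proposal is correct. Parts i) and ii) follow essentially the same route as the paper: holomorphy by local uniform convergence of the Gaussian integrand (using boundedness of $\varphi$), the functional equation by the substitution $q\mapsto q+\sqrt{2}m$ together with \eqref{qp2} and a completion of the square, and the kernel representation by unfolding $\R=\bigsqcup_{m'}(\sqrt{2}m'+[0,\sqrt{2}))$, interchanging sum and integral, and matching the resulting Gaussian sum with \eqref{Jacobi3} at $\tau=i\nu/\pi$; your exponent identities check out, and you even justify the interchange more explicitly than the paper does.

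The only genuine (and minor) divergence is in iii). The paper factors the claim into two separate translation laws: the $z$-law $\Bkernel(z+m;q)=\chi_\alpha(m)e^{\nu(z+\frac{m}{2})m}\Bkernel(z;q)$, already implicit in the computation for i), and the $q$-law $\Bkernel(z;q+\sqrt{2}m')=\Bkernel(z;q)\overline{\chi_\alpha(m')}$, which it obtains from the closed-form \eqref{KernelBargm} by applying the quasi-periodicity \eqref{qpJacobi} of $\theta_3$ under translation by $\tau m'$. You instead work directly with the lattice-sum form $\Bkernel(z;q)=(\nu/\pi)^{3/4}e^{\frac{\nu}{2}z^2}\sum_{n}\chi_\alpha(n)e^{-\nu(\frac{q}{\sqrt 2}-z+n)^2}$ obtained in ii) and get both shifts at once by the reindexing $n\mapsto n-m'+m$, using $e^{\frac{\nu}{2}(z+m)^2}=e^{\frac{\nu}{2}z^2}e^{\nu(z+\frac{m}{2})m}$. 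The two arguments are equivalent in substance (the paper's use of \eqref{qpJacobi} is the same reindexing packaged as a known theta identity); yours is slightly more self-contained, the paper's makes the roles of the $z$- and $q$-quasi-periodicities separately visible. No gap in either part.
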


\begin{proof}
Since $(z,q) \mapsto \varphi(q) e^{-\frac{\nu}{2}(z^2+q^2) +\nu\sqrt{2}z q}$ is entire in $z$ and the integral involving this function is uniformly convergent
{on compact subsets of $\C\times \R$} (for $\varphi$ being bounded on $\R$), we conclude that $\psi^{\alpha,\nu} (z)= \transfBarg\varphi(z)$ is entire. Now, by writing
$\psi^{\alpha,\nu} (z+m):=[\transfBarg \varphi](z+m)$ as integral, using \eqref{TransfBargEqv}, we get
        \begin{align*}
        \psi^{\alpha,\nu} (z+m)
        &= \left(\frac{\nu}{\pi}\right)^{3/4} e^{\frac{\nu}{2}z^2}e^{\nu\left(z+\frac{m}{2}\right)m}  \int_{-\infty}^{+\infty} \varphi(q) e^{-\frac{\nu}{2}(q - \sqrt{2}(z+m))^2} dq.
        \end{align*}
Replacing $q$ by $q+\sqrt{2}m$ in the above integral, yields
         \begin{align*}
        \psi^{\alpha,\nu} (z+m)
        &= \left(\frac{\nu}{\pi}\right)^{3/4} e^{\frac{\nu}{2}z^2}e^{\nu\left(z+\frac{m}{2}\right)m}  \int_{-\infty}^{+\infty} \varphi(q+\sqrt{2}m) e^{-\frac{\nu}{2}(q - \sqrt{2}z)^2} dq.
        \end{align*}
Therefore, since $\varphi(q+\sqrt{2}m)=\chi_\alpha(m)\varphi(q)$, we obtain
         \begin{align*}
        \psi^{\alpha,\nu} (z+m)
        &= \chi_\alpha(m)e^{\nu\left(z+\frac{m}{2}\right)m} \left(\frac{\nu}{\pi}\right)^{3/4} e^{\frac{\nu}{2}z^2}  \int_{-\infty}^{+\infty} \varphi(q) e^{-\frac{\nu}{2}(q - \sqrt{2}z)^2} dq\\
         &= \chi_\alpha(m)e^{\nu\left(z+\frac{m}{2}\right)m}\psi^{\alpha,\nu} (z).
        \end{align*}

To prove ii), we make use of the facts $\R=\bigcup\limits_{m\in\Z} (\sqrt{2} m + [0,\sqrt{2}])$ and $\varphi(q+\sqrt{2}m)=\chi_\alpha(m)\varphi(q)$, to  rewrite $[\transfBarg\varphi](z)$ in \eqref{TransfBargEqv} as
        \begin{align*}
        \psi^{\alpha,\nu} (z)
        &= \left(\frac{\nu}{\pi}\right)^{3/4} e^{\frac{\nu}{2}z^2}  \sum_{m\in\Z}\int_{0}^{\sqrt{2}} \varphi(q+\sqrt{2}m) e^{-\frac{\nu}{2}(q+\sqrt{2}m - \sqrt{2}z)^2} dq \\
        &=   \int_{0}^{\sqrt{2}}\varphi(q) \left(\left(\frac{\nu}{\pi}\right)^{3/4} e^{\frac{\nu}{2}z^2}\sum_{m\in\Z} \chi_\alpha(m) e^{-\frac{\nu}{2}((q - \sqrt{2}z)+\sqrt{2}m)^2}\right) dq .
        \end{align*}
What needed is to give a closed expression of the sum in the right hand side in terms of some special functions. In fact, we have
      \begin{align*}
      \sum_{m\in\Z} \chi_\alpha(m) e^{-\frac{\nu}{2}((q - \sqrt{2}z)+\sqrt{2}m)^2}
       & =
       e^{-\frac{\nu}{2}(q - \sqrt{2}z)^2}\sum_{m\in\Z} \chi_\alpha(m) e^{-\nu m^2 - 2\nu\left(\frac{q}{\sqrt{2}}- z\right)m }\\
       & = 
       e^{-\frac{\nu}{2}(q - \sqrt{2}z)^2}\sum_{m\in\Z}  e^{i\pi m^2\left(\frac{i\nu}{\pi}\right) + 2i\pi \left(\frac{i\nu}{\pi}\left(\frac{q}{\sqrt{2}} - z\right)+\alpha\right) m }.
       \end{align*}
Hence in view of \eqref{Jacobi3}, we deduce
       \begin{equation*}
      \sum_{m\in\Z} \chi_\alpha(m) e^{-\frac{\nu}{2}((q - \sqrt{2}z)+\sqrt{2}m)^2}
      = 
       e^{-\nu\left(\frac{q}{\sqrt 2}-z\right)^2} \theta_{3}\left( \frac{i\nu}{\pi}\left(\frac{q}{\sqrt2}-z\right) +\alpha \bigg| \frac{i\nu}{\pi}\right) .
      \end{equation*}
  This completes the proof of ii).

 Finally, the assertion iii) follows by combining
      $$
      \Bkernel(z+m;y)=\chi_\alpha(m) e^{\nu(z+\frac{m}{2})m} \Bkernel(z;y)
      $$
  which is implicitly contained in the proof of i), and the fact
    \begin{align}\label{InvJacobi2}
      \Bkernel(z;q+\sqrt{2}m') =\Bkernel(z;q) \overline{\chi_\alpha(m')}.
      \end{align}
  To prove \eqref{InvJacobi2}, we write first $\Bkernel(z;q+\sqrt{2}m')$ (in \eqref{KernelBargm}) in terms of
   $\tau =\frac{i\nu}{\pi} $ and $Z= \left(\frac q{\sqrt{2}}- z\right)$ as
     \begin{align*}
       \Bkernel(z;q+\sqrt{2}m')
       &=\left(\frac{\nu}{\pi}\right)^{3/4} e^{\frac{\nu}{2}z^2 -\nu Z^2} e^{i\pi m'^2\tau + 2i\pi \tau Z m' } \theta_{3}\left(\tau Z + \alpha +\tau m' \bigg| \tau\right)
      \end{align*}
 and next use the transformation \eqref{qpJacobi}, to wit
  \begin{align*}
   \theta_{3}\left(\tau Z + \alpha +\tau m' \bigg| \tau\right)
   &= e^{-i\pi m'^2\tau - 2i\pi (\tau Z+\alpha) m' } \theta_{3}\left(\tau Z + \alpha  \bigg| \tau\right)\\
   &= e^{-i\pi m'^2\tau - 2i\pi \tau Z m' } \theta_{3}\left(\tau Z + \alpha  \bigg| \tau\right) \overline{\chi_\alpha(m')} .
    \end{align*}
    This ends the proof.
\end{proof}


An other way to construct integral transform between Hilbert spaces is to consider the kernel function defined as the
bilateral generating function of their orthonormal basis. In our case of the Hilbert spaces
$\Lgamma$ and $\Osd$, we define
        \begin{align}\label{BilateralGen}
        \Gkernel(z;q) & := \sum_{n\in\Z}  \psi^{\alpha,\nu}_n(z) \overline{\varphi_n^{\alpha}(q)}
        \end{align}
and its corresponding integral transform
        \begin{align}\label{TransfG}
        [\transfG \varphi](z) =  \int_{0}^{\sqrt{2}} \Gkernel(z;q) \varphi(q)  dq; \quad z\in \C,
        \end{align}
for given $\varphi:\R \longrightarrow \C$, provided that the integral exists.
Then it will be shown that $\transfG$ is in fact the Bargmann transform $\transfBarg$ on $\Lgamma$ (see Definition \ref{DefBargmTransf}).
 Namely, we have the following

\begin{theorem}
 \begin{itemize}
        \item[i)] For every $n\in \Z$, we have
      \begin{equation}\label{TransfGn}
        [\transfG \varphi_n^{\alpha}](z) =     \psin (z)
      \end{equation}
    Furthermore, $\transfG=\transfBarg$ and $\Gkernel(z;q)=\Bkernel(z;q)$.

 \item[ii)] The kernel function $\Gkernel(z;q)$ is given explicitly by
    \begin{align}\label{BilateralGenExp}
        \Gkernel(z;q)   &= \left(\frac{\nu}{\pi}\right)^{1/4} e^{\frac{\nu}2 z^2} \theta_{\alpha,0}\left( z-\frac{q}{\sqrt 2} \bigg |\frac{i\pi}{\nu}\right).
        \end{align}
 \end{itemize}
\end{theorem}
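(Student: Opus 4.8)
The plan is to establish the closed form ii) first, since it makes the series manipulations needed for i) transparent, and then to read off i) from it.

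For ii): substitute the explicit expressions \eqref{orthonorm-basis} for $\psin$ and \eqref{fctVarphin} for $\varphi_n^{\alpha}$ directly into the definition \eqref{BilateralGen} of $\Gkernel(z;q)$. The numerical prefactors combine as $(2\nu/\pi)^{1/4}\cdot 2^{-1/4}=(\nu/\pi)^{1/4}$, the Gaussian factor $e^{\frac\nu2 z^2}$ (independent of $n$) comes out of the sum, and the remaining exponentials collapse, via $2i\pi(n+\alpha)z-\sqrt2\,i\pi(n+\alpha)q=2i\pi(n+\alpha)\big(z-q/\sqrt2\big)$, to $e^{-\frac{\pi^2}{\nu}(n+\alpha)^2}\,e^{2i\pi(n+\alpha)(z-q/\sqrt2)}$. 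One then recognizes $\sum_{n\in\Z}e^{-\frac{\pi^2}{\nu}(n+\alpha)^2}e^{2i\pi(n+\alpha)(z-q/\sqrt2)}$ as the Riemann theta series \eqref{RiemannTheta} of characteristic $[\alpha,0]$, argument $z-q/\sqrt2$ and modulus $\tau=i\pi/\nu$ — which lies in the upper half-plane since $\nu>0$ and satisfies $i\pi\tau=-\pi^2/\nu$ — giving exactly \eqref{BilateralGenExp}. The absolute and locally uniform convergence of this series on $\C\times\R$ is inherited from that of $\theta_{\alpha,0}$ (equivalently, it is immediate from the Gaussian decay $e^{-\pi^2(n+\alpha)^2/\nu}$ in $n$), which also legitimizes the term-by-term operations below.

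For i): writing $[\transfG\varphi_n^{\alpha}](z)$ out by means of \eqref{TransfG} and \eqref{BilateralGen} and interchanging summation and integration — licit by the uniform convergence just noted, the integration running over the \emph{bounded} interval $[0,\sqrt2]$ — gives
$$
[\transfG\varphi_n^{\alpha}](z)=\sum_{k\in\Z}\psi_k^{\alpha,\nu}(z)\int_0^{\sqrt2}\overline{\varphi_k^{\alpha}(q)}\,\varphi_n^{\alpha}(q)\,dq=\psin(z),
$$
the last equality being the orthonormality of $\{\varphi_n^{\alpha}\}_{n\in\Z}$ in $\Lgamma$ from Proposition \ref{prop-qpVarphi}. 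Comparing with \eqref{TransfBargn}, the operators $\transfG$ and $\transfBarg$ agree on the orthonormal basis $\{\varphi_n^{\alpha}\}$ of $\Lgamma$, and since each is a bounded (indeed isometric) linear map into $\Osd$ they coincide on all of $\Lgamma$; thus $\transfG=\transfBarg$.

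It remains to identify the kernels, $\Gkernel=\Bkernel$. The quickest route is uniqueness of the kernel: restriction to $[0,\sqrt2]$ identifies $\Lgamma$ with $L^2([0,\sqrt2])$, so the pointwise identity $[\transfG\varphi](z)=[\transfBarg\varphi](z)$ for every $\varphi$ forces $\int_0^{\sqrt2}\big(\Gkernel(z;q)-\Bkernel(z;q)\big)\varphi(q)\,dq=0$ for all $\varphi\in L^2([0,\sqrt2])$, whence $\Gkernel(z;\cdot)=\Bkernel(z;\cdot)$ a.e., then everywhere by continuity, and then on all of $\R$ by the common $\sqrt2\Z$-quasi-periodicity in $q$ (cf. \eqref{TransfBargm2}). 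Alternatively one can match the closed forms \eqref{BilateralGenExp} and \eqref{KernelBargm} directly: the exponential prefactors absorb the difference between a characteristic-$[\alpha,0]$ theta at modulus $i\pi/\nu$ and a $\theta_3$ at modulus $i\nu/\pi$, and the resulting identity is precisely Jacobi's imaginary ($\tau\mapsto-1/\tau$) transformation for theta functions. I expect this last identification — whether via the density/continuity bookkeeping for the kernel or via invoking the classical modular transformation of $\theta_3$ — to be the only genuinely delicate point; everything else reduces to bookkeeping of Gaussian exponents and to the orthonormality relation.
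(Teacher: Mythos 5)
Your proposal is correct and follows essentially the same route as the paper: the closed form \eqref{BilateralGenExp} by direct substitution of \eqref{orthonorm-basis} and \eqref{fctVarphin} into \eqref{BilateralGen}, the identity $[\transfG\varphi_n^{\alpha}]=\psin$ by term-by-term integration and orthonormality, the conclusion $\transfG=\transfBarg$ via \eqref{TransfBargn}, and the kernel equality $\Gkernel=\Bkernel$ by testing against the complete system $\{\varphi_n^{\alpha}\}$. Your alternative identification of the kernels through Jacobi's imaginary transformation of $\theta_3$ is exactly what the paper records in the remark following the theorem, so nothing is missing.
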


\begin{proof}
 By definition of $\Gkernel$, we get
      \begin{align*}
        [\transfG \varphi_n^{\alpha}](z)
        &= \int_{0}^{\sqrt{2}} \left(\sum_{k\in\Z}  \psi^{\alpha,\nu}_k(z) \overline{\varphi_k^{\alpha}(q)}\right) \varphi_n^{\alpha}(q)  dq \\
        &= \sum_{k\in\Z}  \psi^{\alpha,\nu}_k(z) \left(\int_{0}^{\sqrt{2}} \varphi_n^{\alpha}(q)\overline{\varphi_k^{\alpha}(q)} dq\right) .
      \end{align*}
      Since $\varphi_n^{\alpha}$ is an orthonormal basis of $\Lgamma$, it follows $[\transfG \varphi_n^{\alpha}](z) =\psin$ and therefore
       $\transfG=\transfBarg$, according to \eqref{TransfBargn}. This leads in particular to
       $$ \int_{0}^{\sqrt{2}} [\Gkernel-\Bkernel](z;q)  \varphi_n^{\alpha} (q) dq =  0; \quad n\in\Z.$$
        Thus, since $\{\varphi_n^{\alpha}; n\in\Z\}$ is complete orthonormal system in $\Lgamma$, we deduce that
       the function $ q \longmapsto [\Gkernel-\Bkernel](z;q) $ is identically zero for every fixed $z\in \C$. Whence $\Gkernel(z;q) =\Bkernel(z;q) $ on $\C\times \R$.
       This ends the proof of assertion i).

Finally, ii) can be checked using the explicit expressions of $\psin$ \eqref{orthonorm-basis} and of $\varphi_n^{\alpha}$ \eqref{fctVarphin}. Thus, from the definition we can rewrite $\Gkernel(z;q)$ as
    \begin{align*}
        \Gkernel(z;q) 
                      & =\sum_{n\in\Z}   \left(\frac{2\nu}{\pi}\right)^{1/4}  e^{\frac{\nu}2 z^2} e^{-\frac{\pi^2}{\nu} (n+\alpha)^2 + 2i\pi (\alpha + n) z}
                      \left(2^{-1/4} e^{-\sqrt{2}i\pi (n+\alpha)q}\right)\\
                      & = \left(\frac{\nu}{\pi}\right)^{1/4}  e^{\frac{\nu}2 z^2 } \sum_{n\in\Z}    e^{i\pi  (n+\alpha)^2\left(\frac{i\pi}{\nu}\right) +                     2i\pi  (n+\alpha) ( z-\frac q{\sqrt{2}})}\\
                      &= \left(\frac{\nu}{\pi}\right)^{1/4} e^{\frac{\nu}2 z^2}  \theta_{\alpha,0}\left( z-\frac{q}{\sqrt 2} \bigg |\frac{i\pi}{\nu}\right).
        \end{align*}
\end{proof}

\begin{remark}
A direct proof of $\Gkernel(z;q)=\Bkernel(z;q)$ can be given starting from the expression of the kernel function $\Gkernel(z;q)$, see \eqref{BilateralGenExp}, and next applying the transformation
 \eqref{TransfTheta} for the Reimann theta series $\theta_{\alpha,\beta}$ combined with the inversion formula
 $$ \theta_3(\xi|\tau) = \left(\frac{i}{\tau} \right)^{1/2} e^{-i\pi\xi^2/\tau} \theta_3\left(\frac{\xi}{\tau}\bigg |-\frac{1}{\tau}\right).$$
 In fact $\Gkernel(z;q)=\Bkernel(z;q)$ and the cited inversion formula for $\theta_3$ are equivalent.
\end{remark}

\begin{remark}
Since $\transfG$ is an unitary operator, then $[\transfG]^{-1}$ coincides with the adjoint of $\transfG: \Osd \longrightarrow \Lgamma$.
We have $$ [\transfG]^{-1} (\psi)(q) : = \int_{S} \overline{\Gkernel(z;q)} \psi(z) e^{-\nu|z|^2}\mes(z) = \scalnu{\psi, \Gkernel(\cdot;q)}$$
which is well defined on a dense subspace of $\Osd$. This then is a new inversion formula for the Bargmann transform $\transfBarg$.
\end{remark}


\section{Description of the space $\Lsd$}
The aim of this section is twofold. First, we give an orthonormal basis of the space $\Lsd$, which can be shown to be a Hilbert space,
whose elements are the complex valued functions on the complex plane $\mathbb{C}$ satisfying the functional equation
\begin{align} \label{FuncEq00}
f(z+m) = e^{ 2i\pi \alpha m} e^{\nu \left(z+\frac m 2\right)m}f(z); \qquad \forall\forall z \in \C, \, \forall m\in \Z,
\end{align}
 and such that
$$|| f ||^2_{\Gamma,\nu}:= \int_{S} |f(z)|^2 e^{-\nu |z|^2} dm(z) <+\infty.$$
 Second, we determine the orthogonal complement of $\Osd$ in $\Lsd$.

\begin{theorem} \quad
The family of functions $\psi_{m,n}^{\alpha,\nu}(z,\overline{z})$; $(m,n)\in \Z^+\times \Z$,
given in terms of the $m^{th}$ Hermite polynomial $H_m$ as
\begin{align}\label{CloHer}
\psi_{m,n}^{\alpha,\nu}(z,\overline{z}) &=  C_{m,n}^{\alpha,\nu}     e^{\frac{\nu}2 z^2 + 2i\pi (\alpha + n) z}
                              H_m \left(\sqrt{2\nu} y + \left(\sqrt{\frac{2}{\nu}}\right)\pi (n+\alpha) \right)
\end{align}
is a complete orthonormal system in $\Lsd$, where $z=x+iy\in \C$; $x,y\in\R$, and
$$ C_{m,n}^{\alpha,\nu} :=  ( 2^m m!)^{-1/2}  \left(\frac{2\nu}{\pi}\right)^{1/4}   e^{-\frac{\pi^2}{\nu}(\alpha+n)^2} .$$
\end{theorem}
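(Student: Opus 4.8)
The plan is to transport everything to the strip model of Section 2 and then separate the real variable $x=\Re z$ from $y=\Im z$. Arguing exactly as in Proposition~\ref{Prop-chara} and Remark~\ref{Rem-normg}, but dropping holomorphy and keeping only measurability, the multiplication map $f\longmapsto g:=e^{-\frac\nu2 z^2-2i\pi\alpha z}f$ is a unitary isomorphism from $\Lsd$ onto the Hilbert space $\mathcal G$ of $\Z$-periodic (in $x$) measurable functions $g$ on $\R^2$ with $\normnua{g}<+\infty$; restricting to a period strip this is just $\mathcal G=L^2\big([0,1]\times\R,\,e^{-2\nu y^2-4\pi\alpha y}\,dx\,dy\big)$, as already noted after \eqref{norm-def2}. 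Since $\epf(z)=e^{\frac\nu2 z^2+2i\pi(\alpha+n)z}$ lies in $\Os$ and the Hermite factor in \eqref{CloHer} depends only on $y=\Im z$ (hence is invariant under $z\mapsto z+m$), each $\psi_{m,n}^{\alpha,\nu}=C_{m,n}^{\alpha,\nu}\,\epf\cdot H_m(\sqrt{2\nu}\,y+\sqrt{2/\nu}\,\pi(n+\alpha))$ satisfies the functional equation \eqref{FuncEq} and its image under the above map is $g_{m,n}(x,y):=C_{m,n}^{\alpha,\nu}e^{2i\pi nx}e^{-2\pi ny}H_m\big(\sqrt{2\nu}(y+\pi(n+\alpha)/\nu)\big)$. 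It therefore suffices to prove that $\{g_{m,n}:(m,n)\in\Z^+\times\Z\}$ is a complete orthonormal system in $\mathcal G$.

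For orthonormality I would compute $\scalnua{g_{m,n},g_{m',n'}}$ and carry out the $x$-integration over $[0,1]$ first: it produces $\delta_{n,n'}$, so only $n=n'$ survives. For $n=n'$ the remaining integral is $C_{m,n}^{\alpha,\nu}C_{m',n}^{\alpha,\nu}\int_\R H_m(t_n(y))H_{m'}(t_n(y))\,e^{-2\nu y^2-4\pi(n+\alpha)y}\,dy$ with $t_n(y)=\sqrt{2\nu}(y+\pi(n+\alpha)/\nu)$; completing the square in the exponent (cf. Lemma~\ref{lem-GaussIntegral}), substituting $t=t_n(y)$, and invoking the classical Hermite orthogonality $\int_\R H_m(t)H_{m'}(t)e^{-t^2}\,dt=2^m m!\sqrt\pi\,\delta_{m,m'}$ gives $\delta_{m,m'}$, the constant $C_{m,n}^{\alpha,\nu}$ being designed precisely to absorb the factor $e^{2\pi^2(n+\alpha)^2/\nu}$ arising from completing the square and the Gaussian normalisation. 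This is routine and parallels the proof of Theorem~\ref{ThmbasisO}.

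For completeness, suppose $f\in\Lsd$ with $\scalnu{f,\psi_{m,n}^{\alpha,\nu}}=0$ for all $(m,n)$; equivalently $\scalnua{g,g_{m,n}}=0$ for the corresponding $g\in\mathcal G$. By Tonelli, $\int_0^1|g(x,y)|^2\,dx<+\infty$ for a.e. $y$, so the $x$-Fourier coefficients $c_n(y):=\int_0^1 g(x,y)e^{-2i\pi nx}\,dx$ are defined a.e., and Parseval in $x$ together with Tonelli yields $\sum_{n}\int_\R|c_n(y)|^2 e^{-2\nu y^2-4\pi\alpha y}\,dy=\normnua{g}^2<+\infty$; in particular each $c_n$ belongs to $\mathcal H:=L^2(\R,\,e^{-2\nu y^2-4\pi\alpha y}\,dy)$. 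Since $g_{m,n}$ carries only the single frequency $e^{2i\pi nx}$, Fubini turns the hypothesis into $\int_\R c_n(y)\,e^{-2\pi ny}H_m\big(\sqrt{2\nu}(y+\pi(n+\alpha)/\nu)\big)\,e^{-2\nu y^2-4\pi\alpha y}\,dy=0$ for all $m\geq0$, i.e. $c_n$ is orthogonal in $\mathcal H$ to each of the real-valued functions $\phi_{m,n}(y):=e^{-2\pi ny}H_m\big(\sqrt{2\nu}(y+\pi(n+\alpha)/\nu)\big)$. But $\{\phi_{m,n}:m\geq0\}$ is an orthonormal basis of $\mathcal H$ for every fixed $n$: multiplication by $e^{-2\pi ny}$ is a unitary map of $L^2(\R,\,e^{-2\nu y^2-4\pi(n+\alpha)y}\,dy)$ onto $\mathcal H$, the translation $u=y+\pi(n+\alpha)/\nu$ identifies the source with $L^2(\R,\,e^{-2\nu u^2}\,du)$ up to a positive constant, and in the latter the rescaled Hermite polynomials $H_m(\sqrt{2\nu}\,u)$ form a basis by the classical density of polynomials in a Gaussian $L^2$-space. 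Hence $c_n=0$ a.e. for every $n$, so $\int_0^1|g(x,y)|^2\,dx=\sum_n|c_n(y)|^2=0$ a.e., whence $g=0$ and $f=0$.

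I expect the orthonormality to be entirely mechanical; the delicate point is the completeness argument, namely justifying the Fubini/Parseval interchanges that reduce the vanishing of $\scalnu{f,\psi_{m,n}^{\alpha,\nu}}$ to the vanishing of every Fourier mode $c_n$, and reducing the completeness of the $y$-family $\{\phi_{m,n}\}_m$ in the weighted space $\mathcal H$ to the classical completeness of Hermite polynomials in $L^2(\R,e^{-2\nu u^2}\,du)$. Once these are in place the claim follows.
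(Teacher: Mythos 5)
Your proposal is correct and follows essentially the same route as the paper's proof: orthonormality via the $x$-integration over $[0,1]$ producing $\delta_{n,n'}$, then the substitution $t=\sqrt{2\nu}\,(y+\pi(n+\alpha)/\nu)$ and classical Hermite orthogonality; completeness by expanding in the $x$-Fourier modes and reducing, through the same change of variables, to completeness of the Hermite polynomials in a Gaussian $L^2$ space (the paper's $a_n(y)$ and $b_n(\xi)$ are exactly your $c_n$ and their transported versions). The only cosmetic slip is calling $\{\phi_{m,n}\}_{m\ge 0}$ an \emph{orthonormal} basis of $\mathcal{H}$ when, without the normalizing constants, it is merely an orthogonal complete system; this does not affect the argument.
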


\begin{proof}
Note for instance that one can check easily that the functions $\psi_{m,n}^{\alpha,\nu}$ satisfy the functional equation \eqref{FuncEq00}. Moreover, they form an orthonormal system in $\Lsd$. Indeed, if we let $\xi_{y,n}$ stands for $\xi_{y,n}= \sqrt{2\nu} y + \left(\sqrt{{2}/{\nu}}\right)\pi (n+\alpha)$,
then direct computation, yields
 \begin{align*}
 \scalnu{\psi_{j,k}^{\alpha,\nu},\psi_{m,n}^{\alpha,\nu}}
 & =  C_{j,k}^{\alpha,\nu}  \overline{C_{m,n}^{\alpha,\nu}} \int_S  e^{\frac{\nu}2 (z +\overline{z})^2 + 2i\pi [(\alpha + k) z + (\alpha + n) \overline{z}]} H_j (\xi_{y,k}) H_{m} (\xi_{y,n}) dxdy \\
& = C_{j,k}^{\alpha,\nu}  \overline{C_{m,n}^{\alpha,\nu}} \int_0^1 \int_{-\infty}^{+\infty} e^{2i\pi(k-n)x}   e^{-2\nu y^2 - 2\pi(2\alpha+k+n) y}
                                 H_j (\xi_{y,k})H_{m} (\xi_{y,n'}) dxdy \\
&= \delta_{k,n} C_{j,k}^{\alpha,\nu}  \overline{C_{m,n}^{\alpha,\nu}}  \left( \int_{-\infty}^{+\infty} e^{-2\nu y^2 - 4\pi(\alpha+k) y}
                              H_j (\xi_{y,n})H_{m} (\xi_{y,n}) dy\right).
\end{align*}
This follows since $\int_0^1 \int_{-\infty}^{+\infty} e^{2i\pi(k-n)x} dx = \delta_{k,n}$.
The change of variable $\xi=\xi_{y,n}$, so that $y=  \frac{\xi}{\sqrt{2\nu}} -  \frac{\pi (n+\alpha)}{\nu}$, leads to
\begin{align*}
\scalnu{\psi_{j,k}^{\alpha,\nu},\psi_{m,n}^{\alpha,\nu}}
       &= C_{j,k}^{\alpha,\nu} \overline{ C_{m,n}^{\alpha,\nu}} \left(\frac{1}{2\nu}\right)^{1/2}  \delta_{k,n}
             \int_{-\infty}^{+\infty}  e^{-\xi^2 + \frac{2\pi^2}{\nu}(\alpha+n)^2} H_j (\xi)   H_{m} (\xi) d\xi.
 \end{align*}
The last integral expresses the orthogonality of the Hermite polynomials, so that we have
\begin{align*}
\scalnu{\psi_{j,k}^{\alpha,\nu},\psi_{m,n}^{\alpha,\nu}}
&= |C_{m,n}^{\alpha,\nu}|^2 \left(\frac{1}{2\nu}\right)^{1/2} \delta_{k,n} \left(2^m\sqrt{\pi} m! \delta_{j,m}  e^{\frac{2\pi^2}{\nu}(\alpha+n)^2} \right)
= \delta_{k,n} \delta_{j,m}  .
 \end{align*}
In the other hand, note that the assertion of Proposition \ref{Prop-chara} holds true for $\Lsd$ and reads:
$f$ belongs to $\Lsd$ if and only if
    $ f(z):= e^{\frac{\nu}2 z^2 + 2i\pi \alpha z} g(z)$
     for certain almost everywhere simply periodic function $g$ (in the $x$-direction) and such that $\normnua{g}<+\infty$.
Thus, every $f\in \Lsd$ can be written as $$f(z) = e^{\frac\nu 2 z^2 + 2i\pi \alpha z}\sum\limits_{n\in\Z} a_n(y) e^{2i\pi n x} $$ in $\Lsd$.
Therefore, we have
$$ \normnu{f} = \sum\limits_{n\in\Z}   \left(\int_{-\infty}^{+\infty}|a_k(y)|^2 e^{-2\nu y^2 - 4\pi \alpha  y} dy \right) < +\infty  ,$$
whose the proof is similar to the one given for iii) in Theorem \ref{ThmbasisO}. In particular for every fixed integer $k$, we have
$$\int_{-\infty}^{+\infty}|a_n(y)|^2 e^{-2\nu y^2 - 4\pi \alpha  y} dy   < +\infty ,$$
which shows that the function
\begin{align}\label{Fct-b}
\xi \in \R \longmapsto b_n(\xi) := a_n\left( \frac{\xi}{\sqrt{2\nu}} -  \frac{\pi (\alpha+n)}{\nu}\right)  e^{\psi(\xi)} =
a_n(y)  e^{2\pi n y} e^{\frac{2\pi^2 (\alpha+n)^2}{\nu}}
\end{align}
belongs to $L^2(\R; e^{-\xi^2}d\xi)$, where $\psi(\xi)$ is the real valued function defined by $$\psi(\xi):= 2\pi n \left( \frac{\xi}{\sqrt{2\nu}}
-  \frac{\pi (\alpha+n)}{\nu}\right) +  \frac{2\pi^2 (\alpha+n)^2}{\nu}.$$
Furthermore, we have
\begin{align*}
\scalnu{f,\psi_{m,n}^{\alpha,\nu}}
&=C_{m,n}^{\alpha,\nu} \sum_{k\in\Z} \left( \int_0^1 e^{2i\pi (k-n) x} dx\right)
      \left( \int_{-\infty}^{+\infty} a_k(y) H_m \left(\xi_{y,n} \right) e^{-2\nu y^2 - 2\pi (2\alpha +n) y} dy\right) \nonumber
      \\
&=C_{m,n}^{\alpha,\nu}
     \int_{-\infty}^{+\infty} a_n(y) H_m \left(\xi_{y,n} \right)
     e^{-2\nu y^2 - 2\pi (2\alpha +n) y} dy   \nonumber\\
 &=  C_{m,n}^{\alpha,\nu} \left(\frac{1}{2\nu}\right)^{1/2}
  \int_{-\infty}^{+\infty} a_n\left( \frac{\xi}{\sqrt{2\nu}} -  \frac{\pi (n+\alpha)}{\nu}\right)  e^{\psi(\xi)}
   H_m \left(\xi \right) e^{-\xi^2} d\xi  
      \\
       &=  C_{m,n}^{\alpha,\nu} \left(\frac{1}{2\nu}\right)^{1/2}
  \int_{-\infty}^{+\infty} b_n(\xi)  H_m \left(\xi \right) e^{-\xi^2} d\xi.
\end{align*}
 Thus, to show that $(\psi_{m,n}^{\alpha,\nu})_{m,n}$ is a complete system in $\Lsd$, let assume that $\scalnu{f,\psi_{m,n}^{\alpha,\nu}} =0$ for every $m\in \Z^+$ and $n\in\Z$. Hence, it follows
$$
\int_{-\infty}^{+\infty} b_n(\xi) H_m \left(\xi \right) e^{-\xi^2} d\xi = 0
   $$
for every $m\in \Z^+$ and $n\in\Z$. By completeness of the Hermite polynomials $(H_m(\xi))_m$ in the Hilbert space $L^2(\R, e^{-\xi^2} d\xi)$, we conclude that the function
$b_n(\xi)$, defined by \eqref{Fct-b} and belonging to $ L^2(\R, e^{-\xi^2} d\xi)$, is identically zero on $\R$. This implies that $a_n\equiv 0$ on $\R$ for every $n\in \Z$ and therefore $f=0$. This completes de proof.
\end{proof}

\begin{corollary}
The space  $\Osd$ is a closed subspace of $\Lsd$. Its orthogonal complement in $\Lsd$ is spanned by $\psi_{m,n}^{\alpha,\nu}$ with $m=1,2,\cdots,$ and $n\in\Z$, in the sense that $f\in (\Osd)^{\perp}$ if and only if it is of the form
$$ f(z)= \sum_{m=1}^\infty \sum_{n\in\Z} a_{m,n} \psi_{m,n}^{\alpha,\nu}(z,\overline{z})
\quad \mbox{with} \quad
  \sum_{m=1}^\infty \sum_{n\in\Z} |a_{m,n}|^2 < +\infty .$$
\end{corollary}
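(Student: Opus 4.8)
The plan is to deduce the corollary directly from the preceding theorem, which already supplies a complete orthonormal basis $\{\psi_{m,n}^{\alpha,\nu}\}_{(m,n)\in\Z^+\times\Z}$ of $\Lsd$. First I would recall from the previous theorem's proof that for any $f\in\Lsd$ we have the expansion
\begin{equation*}
f(z) = e^{\frac{\nu}{2}z^2 + 2i\pi\alpha z}\sum_{n\in\Z} a_n(y) e^{2i\pi n x},
\end{equation*}
and that $f$ lies in $\Osd$ precisely when it is entire, equivalently (by Proposition \ref{Prop-chara} applied as in the theorem's proof) when the simply periodic factor $g(z)=\sum_n a_n(y)e^{2i\pi n x}$ is holomorphic, which forces each Fourier coefficient $a_n(y)e^{-2\pi n y}$ to be constant in $y$; this is exactly the statement that in the orthonormal expansion $f=\sum_{m,n} a_{m,n}\psi_{m,n}^{\alpha,\nu}$ only the $m=0$ terms survive, since $\psi_{0,n}^{\alpha,\nu}$ is a scalar multiple of $\psin$ and $H_0\equiv 1$. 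Thus $\Osd = \overline{\operatorname{span}}\{\psi_{0,n}^{\alpha,\nu}:n\in\Z\}$ inside $\Lsd$.

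Next I would observe that $\Osd$ is closed in $\Lsd$: indeed it is the closed linear span of part of an orthonormal basis, hence automatically a closed subspace. (Alternatively one may invoke the estimate \eqref{estimationK}, which shows the $\Lsd$-limit of a sequence in $\Osd$ is again entire, but the orthonormal-basis argument is cleanest and uses only the theorem just proved.) Since $\{\psi_{m,n}^{\alpha,\nu}\}$ is a complete orthonormal system and $\Osd$ is the closed span of those indexed by $m=0$, the orthogonal complement $(\Osd)^\perp$ is the closed span of the remaining basis vectors, namely those indexed by $m\geq 1$ and $n\in\Z$. This yields precisely the asserted characterization: $f\in(\Osd)^\perp$ if and only if $f=\sum_{m=1}^\infty\sum_{n\in\Z} a_{m,n}\psi_{m,n}^{\alpha,\nu}$ with $\sum_{m\geq 1}\sum_n|a_{m,n}|^2<+\infty$, this being just the Hilbert-space expansion restricted to the complementary index set.

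The only point requiring a little care — and the place I expect any real work to lie — is verifying that the $m=0$ subfamily $\{\psi_{0,n}^{\alpha,\nu}\}$ spans exactly $\Osd$ and not something larger or smaller. One inclusion is immediate from the earlier computation $\psi_{0,n}^{\alpha,\nu} = (\text{const})\cdot\psin$ together with Theorem \ref{ThmbasisO}(ii) showing $\{\psin\}$ is a basis of $\Osd$. For the reverse inclusion one must argue that an element of $\Lsd$ whose expansion involves some $\psi_{m,n}^{\alpha,\nu}$ with $m\geq 1$ cannot be entire; this follows because $H_m(\xi_{y,n})$ with $m\geq 1$ is a nonconstant polynomial in $y=\Im z$, so $\psi_{m,n}^{\alpha,\nu}$ is genuinely a function of $z$ and $\overline z$ and fails the Cauchy--Riemann equations, whence it is orthogonal to the holomorphic space $\Osd$. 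Once this dichotomy between $m=0$ and $m\geq 1$ is pinned down, the corollary is an immediate formal consequence of orthonormal-basis bookkeeping, and no further estimation or convergence analysis is needed beyond what the theorem already provides.
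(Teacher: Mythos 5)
Your overall route is the right one and is essentially the proof the paper intends: since the preceding theorem gives $\{\psi_{m,n}^{\alpha,\nu}\}_{(m,n)\in\Z^+\times\Z}$ as a complete orthonormal system in $\Lsd$, and since $H_0\equiv 1$ gives $\psi_{0,n}^{\alpha,\nu}=\psin$ exactly, the corollary is orthonormal-basis bookkeeping once one knows that $\Osd$ is the closed span of the $m=0$ vectors. Two steps in your write-up, however, are justified incorrectly, even though the correct justifications are already available in the paper. First, your primary argument for closedness of $\Osd$ (``it is the closed linear span of part of an orthonormal basis, hence closed'') is circular: that $\Osd$ equals this closed span is precisely what you are in the middle of proving. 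The clean fix is the one you mention only as an alternative, or better still the remark that $\Osd$ was shown in Section 2 to be complete for $\normnu{\cdot}$, which is the restriction of the norm of $\Lsd$; a complete subspace is closed, and then $\overline{\operatorname{span}}\{\psi_{0,n}^{\alpha,\nu}\}\subseteq\Osd$, while the reverse inclusion $\Osd\subseteq\overline{\operatorname{span}}\{\psi_{0,n}^{\alpha,\nu}\}$ is immediate from Theorem \ref{ThmbasisO} (the $\psin$ form an orthonormal basis of $\Osd$, and norm convergence there is norm convergence in $\Lsd$).

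Second, the inference ``$\psi_{m,n}^{\alpha,\nu}$ with $m\geq 1$ fails the Cauchy--Riemann equations, whence it is orthogonal to $\Osd$'' is not valid as stated: a non-holomorphic element of $\Lsd$ need not be orthogonal to the holomorphic subspace (it can have a nonzero holomorphic component). The orthogonality you need is true, but its correct source is the computation already carried out in the theorem, namely $\scalnu{\psi_{j,k}^{\alpha,\nu},\psi_{m,n}^{\alpha,\nu}}=\delta_{j,m}\delta_{k,n}$; since every $f\in\Osd$ is the $\normnu{\cdot}$-limit of finite combinations of the $\psi_{0,n'}^{\alpha,\nu}=\psi_{n'}^{\alpha,\nu}$, continuity of the inner product gives $\scalnu{f,\psi_{m,n}^{\alpha,\nu}}=0$ for all $m\geq 1$, which is the reverse inclusion you were after. (Your detour through the Fourier coefficients $a_n(y)$ is unnecessary for this, and contains a small sign slip: holomorphy forces $a_n(y)e^{2\pi ny}$, not $a_n(y)e^{-2\pi ny}$, to be constant.) With these two repairs the argument is complete and coincides with the paper's: $\Osd$ is the closed span of the $m=0$ basis vectors, so $(\Osd)^{\perp}$ is the closed span of those with $m\geq 1$, i.e.\ exactly the sums $\sum_{m\geq 1}\sum_{n\in\Z}a_{m,n}\psi_{m,n}^{\alpha,\nu}$ with $\sum_{m\geq 1}\sum_{n\in\Z}|a_{m,n}|^2<+\infty$.
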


Description of the Hilbert space $\Lsd$ can be investigated from a spectral theory point of view. In fact
this can be handled by introducing the first order differential operators $ A= {\partial}/{\partial \bar z}$ and its formal adjoint
$$A^*= -\frac{\partial}{\partial z} + \nu \overline{z}$$
with respect to the Gaussian density measure $ e^{-\nu|z|^2}\mes$, and next considering
 the elliptic self-adjoint differential operator
$\Delta_\nu 
=   A^* A = AA^* - \nu Id$
acting on $\Lsd$.

\begin{remark}
The shifted intertwined operator
$ e^{-\frac \nu 2|z|^2}\left(\Delta_\nu +\frac \nu 2\right)\left( e^{\frac \nu 2|z|^2} \cdot \right)
$ is the well known twisted Laplacian
$$
L_\nu := - \frac 14\left\{4\frac{\partial^2}{\partial z \partial \bar z} + 2\nu
\left(z\frac{\partial}{\partial z}-\bar z \frac{\partial}{\partial
\bar z}\right)- \nu^2 |z|^2\right\},
$$
interpreted as the Hamiltonian of a non-relativistic quantum
 particle moving on the $(x,y)$-plane under the action of an external constant magnetic field of magnitude $\nu$.
 It can also be linked to the sub-Laplacian
 on the Heisenberg group $H^{3}_{eis}=\C_z\times\R_t$ through the Fourier transform in $t$.
 \end{remark}

The associated eigenvalue problem
 \begin{align}\label{EigenVP}
\Delta^{\nu}_{\Gamma,\chi} f = \nu \lambda f; \quad f\in \Lsd,
\end{align}
can be reduced, by ellipticity of $\Delta_\nu$, to the space $\mathcal{C}^{\infty,\nu}_{L,\alpha}(\C)$ of smooth functions $f: \C
\stackrel{\mathcal{C}^\infty}{\longrightarrow} \C$  satisfying
$$ f(z+m) = e^{\nu\left( z + \frac m 2\right) m +2i\pi \alpha m} f(z); \qquad \forall\forall z\in \C, \, \forall m\in\Z.$$
A fundamental fact is that the operator $A^*$ and $\Delta_\nu$ preserve the quasi-periodic property. Furthermore, $\Delta_\nu$ leaves invariant the eigenspace
\begin{align*}
\Elambda &= \{f\in \Lsd \, \mbox{ such that } \Delta_{\nu} f = \nu \lambda f\} \\
&=  \{f\in \mathcal{C}^{\infty,\nu}_{L,\alpha}(\C) \, \mbox{ such that } \Delta_{\nu} f = \nu \lambda f \, \mbox{ and } \, \normnu{f} <+\infty  \}.
\end{align*}
%
%
One shows also that $\Elambda$ is non-trivial if and only if $\lambda =m$; for certain $m=0,1,2, \cdots$.
with $\Ell=\Osd$. Thence, the following Hilbertian orthogonal decomposition
\begin{align}
\Lsd &= \Osd \oplus \left(\bigoplus_{m=1}^\infty \Em \right)
\end{align}
holds. Moreover, one obtains that for fixed positive integer $m=1,2, \cdots$, the operator $(\nu^m m!)^{-1/2}[i A^*]^m $ maps isometrically
 the orthogonormal basis $\psin$ of $\Osd$ (given through \eqref{orthonorm-basis}) to an orthonomal basis of the $L^2$-eigenspace
 $\Em$. More precisely, we have
\begin{align}
[(\nu^m m!)^{-1/2} [i A^*]^m \psin](z) = \psi_{m,n}^{\alpha,\nu}(z,\overline{z}),
\end{align}
where $ \psi_{m,n}^{\alpha,\nu}(z,\overline{z})$ are those in \eqref{CloHer}. The previous result follows easily by induction on $m$ using the fact that
$[A^*]^{m+1} \psin  =  A^* (\psi_{m,n}^{\alpha,\nu}) =\nu \overline{z}\psi_{m,n}^{\alpha,\nu}     - \pz  \psi_{m,n}^{\alpha,\nu}$ combined with
 the well-known recurrence formula $H_{m+1}(x) = 2 x H_{m}(x) - H_{m}'(x)$ for the Hermite polynomials.



\end{document}